\documentclass{siamart251216}
\usepackage[T1]{fontenc}
\usepackage{lmodern,amssymb,mathtools,bm,xcolor}
\usepackage{booktabs,longtable,array,graphicx,microtype,enumitem}
\usepackage{tikz-cd}
\usepackage{placeins}
\graphicspath{{figures/}{paper/figures/}}
\usepackage{cite}
\usepackage{stmaryrd}

\allowdisplaybreaks
\setlist{nosep}
\newsiamremark{example}{Example}
\newcommand{\dd}{\mathrm d}
\newcommand{\id}{\operatorname{id}}
\newcommand{\Alt}{\operatorname{Alt}}
\newcommand{\ran}{\operatorname{ran}}
\newcommand{\spn}{\operatorname{span}}
\newcommand{\tr}{\operatorname{tr}}
\newcommand{\sym}{\operatorname{sym}}
\newcommand{\skw}{\operatorname{skw}}
\newcommand{\dev}{\operatorname{dev}}
\newcommand{\grad}{\operatorname{grad}}
\newcommand{\curl}{\operatorname{curl}}
\newcommand{\Div}{\operatorname{div}}
\newcommand{\hess}{\operatorname{hess}}
\newcommand{\defop}{\operatorname{def}}
\newcommand{\inc}{\operatorname{inc}}
\newcommand{\mskw}{\operatorname{mskw}}
\newcommand{\vskw}{\operatorname{vskw}}
\newcommand{\norm}[2][]{\left\lVert#2\right\rVert_{#1}}
\headers{Sparse-grid BGG sequences}{Yihanqi Hu and Yuwen Li}
\title{Sparse-grid discretizations of BGG sequences}
\author{Yihanqi Hu\thanks{School of Mathematical Sciences, Zhejiang University,
866 Yuhangtang Road, Hangzhou 310058, Zhejiang, People's Republic of China
(\email{huyihanqi@zju.edu.cn}, \email{liyuwen@zju.edu.cn}).}
\and Yuwen Li\footnotemark[1]}
\date{}
\begin{document}
\maketitle
\begin{abstract}
We develop the first tensor-product sparse-grid finite element discretizations of 
Bernstein--Gelfand--Gelfand (BGG) complexes. Alpert wavelets and their
first and second primitives yield compatible hierarchies of discontinuous,
continuous, and continuously differentiable piecewise polynomials, that serve as one-dimensional ingredients of BGG sparse-grid finite elements. We construct a bounded commuting sparse-grid projection that yields stable discrete potentials for Hessian, elasticity, and div-div complexes. Under mixed Sobolev regularity, we prove sparse-grid approximation estimates with dimension-independent convergence rates in the number of degrees of freedom.
Experiments for a three-dimensional incompatibility problem and a two-dimensional biharmonic problem on an annulus illustrate the efficiency of sparse grids relative to full grids.
\end{abstract}
\begin{keywords}
BGG complex, sparse grid, integrated Alpert wavelet, mixed finite element, commuting interpolation, stable discrete potential
\end{keywords}
\begin{MSCcodes}
65N30, 65D15, 58J10
\end{MSCcodes}

\section{Introduction}\label{sec:introduction}
Differential complexes organize the stability conditions and conservation
laws of many PDEs. Their discrete counterparts, such as finite element
exterior calculus (FEEC)~\cite{AFW2006,AFW2010},
provide a systematic way to construct stable mixed finite element methods. FEEC theory is well established for de Rham complexes, such as PDEs posed in H(curl) and H(div), with differential forms and bounded commuting projections playing
central roles. However, PDEs arising from elasticity and general relativity often involve tensor fields and higher order differential operators, in which case FEEC theory for the discretization of corresponding complexes, such as Hessian, elasticity, and div-div complexes, requires further development. 

The Bernstein--Gelfand--Gelfand (BGG) construction connects such tensor-valued
complexes to diagrams of de Rham complexes. In~\cite{AH}, Arnold and Hu developed
an analytic framework for deriving cohomology, regular decomposition, and Poincar\'e inequalities of some complexes of tensor fields
from simpler ones; the Hessian, elasticity, and div-div complexes are key
examples. These constructions are built upon two operations:
differentiation along the input complexes and algebraic maps linking their
coefficient spaces. A BGG-compatible finite element discretization should be compatible with the linking maps and the algebraic projections used in BGG reduction.
On simplicial meshes, Chen and Huang~\cite{CHComplex2026} constructed three-dimensional
Hessian, elasticity, and div--div finite element complexes by applying BGG reduction to suitably restricted
smooth finite element de Rham complexes.

Tensor-product finite elements offer a particularly explicit setting for BGG discretization, which is often presented in a framework of double forms.
Double forms are differential forms whose coefficients are themselves
alternating tensors; see~\cite{HuLin2025,BKG2025} for recent systematic
developments. Their two form degrees organize the de Rham
complexes and the algebraic maps linking them in the BGG diagram.
Bonizzoni, Hu, Kanschat, and
Sap~\cite{BHKS} give a unified tensor-product construction using splines or
finite elements. Their one-dimensional spaces generate double-form spaces,
whose two form degrees determine the polynomial degree and continuity in each
coordinate. The linking maps preserve these coefficient spaces, and BGG
reduction yields conforming tensor-valued complexes. At minimal continuity,
with degree parameter $k\ge1$,
the finite element factors are $C^1$ piecewise polynomials of degree $k+2$,
$C^0$ piecewise polynomials of degree $k+1$, and discontinuous piecewise
polynomials of degree $k$. 

This full-grid construction is the starting point
of the present work. For fixed
polynomial degree, a level-$n$ Cartesian discretization in dimension $d$ has
dimension of order $2^{nd}$. Sparse grids retain selected tensor products of
one-dimensional detail spaces and reduce the dimension to
$O(2^n(n+1)^{d-1})$. Moreover, the approximation power of sparse-grid finite elements is almost the same full-grid ones under sufficient mixed Sobolev regularity, see~\cite{BG2004}.
Hierarchical multiwavelet bases have also been used to construct sparse-grid
discontinuous Galerkin methods for high-dimensional PDEs~\cite{WTGC2016}.
However, a naive component-wise sparse-grid discretization of  vector-valued and tensor-valued problems cannot be stable or convergent. In such cases, the sparse-grid spaces must form a subcomplex
and admit stability bounds uniform in the sparse-grid level.

Sparse-grid discretization of de Rham complexes was initiated 
in~\cite{Gradinaru2002,GH2003}, where lowest-order FEEC on sparse grid was constructed.
In our previous work~\cite{HL}, we have designed higher-order sparse-grid de Rham complexes and constructed stable discrete potentials. To our knowledge, there is no sparse grid construction for BGG sequences in the literature. In the present work, we combined Alpert's discontinuous
$L^2$-orthogonal wavelets~\cite{Alpert1993} (Alpert detail) with their first and second primitives to obtain BGG-compatible finite element subcomplexes on sparse grids. We use a common downward-closed index set throughout the double-form
diagram. A coordinate-weight decomposition ensures that components
coupled by the linking maps share the same scalar sparse-grid space.
The linking maps and the associated algebraic projections and inverses
therefore preserve the sparse spaces, allowing BGG reduction to yield
sparse Hessian, elasticity, and div-div subcomplexes.

At the continuous level, existence of stable BGG potentials follows from regularized
Poincar\'e operators~\cite{CM2010,CapHu2023,AH}. In~\cite{HL}, we used a
tensor-product homotopy operator to construct stable discrete potentials of sparse-grid de Rham complexes. It seems quite difficult to derive such a discrete homotopy operator for BGG sequences. Therefore, we transfer the continuous BGG potentials into discrete ones on sparse grids, through a sparse-grid projection that commutes with both
differentiation and the algebraic linking maps.
The projection's uniform $H^1\to L^2$ bound yields discrete potentials with
bounds independent of the index set for fixed polynomial degree.
These potentials establish exactness and inf-sup stability of mixed methods.
We also prove mixed Sobolev approximation estimates for the sparse interpolants
and reduced tensor spaces. 

The rest of the paper is organized as follows. 
\Cref{sec:background} recalls the BGG framework, continuous regular potentials,
and full-grid spaces.
\Cref{sec:sparse} develops the compatible hierarchies and sparse BGG complexes.
Stable potentials and exactness are proved in
\cref{sec:potentials}, followed by approximation estimates in
\cref{sec:approximation}. \Cref{sec:experiments} then presents the numerical
experiments and specifies the boundary conditions and manufactured solutions
for the incompatibility problem on the cube and the biharmonic problem on an annulus.

\subsection{Notation}
Unless stated otherwise, let $\Omega=(0,1)^d$; set $I=(0,1)$ and $\mathbb{N}_0=\{0,1,2,\ldots\}$.
The polynomial degree parameter $k\ge1$ is a fixed integer.
Write $\bm x=(x_1,\ldots,x_d)$, let $\bm e_j$ be the $j$th coordinate vector,
and set $\bm0=(0,\ldots,0)$, $\bm1=(1,\ldots,1)$, and $\bm2=(2,\ldots,2)$.
For an interval $K$, $\mathbb{P}_j(K)$ is the univariate polynomial space of degree at
most $j$. Let $\mathbb{P}_j=\mathbb{P}_j(I)$ and set $\mathbb{P}_j=\{0\}$ for $j<0$.
For a multi-index $\bm{\alpha}$, define $|\bm{\alpha}|_1=\sum_j\alpha_j$ and
$\partial^{\bm\alpha}=\prod_j\partial_j^{\alpha_j}$.
All derivatives are distributional unless otherwise stated. 
The symbol $\lesssim$ allows constants depending on fixed $k,d$ but not on the
sparse-grid level; $A\eqsim B$ means $A\lesssim B\lesssim A$.
The same convention applies to generic constants $C$ unless additional
dependence is indicated.
The notation $(\cdot,\cdot)$ denotes the $L^2(\Omega)$ inner
product, and $P_M$ denotes orthogonal projection onto a closed subspace $M$.
For a differential operator $D$, its domain space is
\[
 H(D;\Omega)=\{u\in L^2(\Omega):Du\in L^2(\Omega)\},
\]
which is equipped with the graph norm $\norm[H(D)]u^2=\norm[L^2]u^2+\norm[L^2]{Du}^2$.

\section{BGG and full-grid discretization}\label{sec:background}
In this section, we recall the BGG construction and continuous regular-potential
results of Arnold and Hu~\cite{AH}, followed by the tensor-product finite
element discretizations of Bonizzoni et al.~\cite{BHKS}.

\subsection{Double forms and BGG complexes}\label{sec:continuous}
For integers $0\le a,b\le d$, let $\Alt^a\mathbb{R}^d$ be the space of alternating $a$-linear forms on $\mathbb{R}^d$, and set
$\Alt^{a,b}\mathbb{R}^d=\Alt^a\mathbb{R}^d\otimes\Alt^b\mathbb{R}^d$.
For $\mathcal B_a=\{\bm s\in\{0,1\}^d:|\bm s|_1=a\}$, we write
$e_{\bm s}=\dd x_{j_1}\wedge\cdots\wedge\dd x_{j_a}$, where $\{j_1,\ldots,j_a\}=\{j:s_j=1\}$ with $j_1<\cdots<j_a$ and $e_{\bm0}=1$.
These bases and their tensor products are orthonormal. A double-form field is
$u=\sum_{\bm s\in\mathcal B_a,\bm t\in\mathcal B_b}
u_{\bm s,\bm t}e_{\bm s}\otimes e_{\bm t}$, where each
$u_{\bm s,\bm t}$ is a scalar coefficient function. Define its exterior derivative
\begin{equation*}
 \dd u=\sum_{\bm s,\bm t,j}\partial_ju_{\bm s,\bm t}
 (\dd x_j\wedge e_{\bm s})\otimes e_{\bm t}.
\end{equation*}
For $a<d$ and $b>0$, define the algebraic linking map
$s^{a,b}:\Alt^{a,b}\mathbb{R}^d\to\Alt^{a+1,b-1}\mathbb{R}^d$ on coordinate basis forms by
\begin{equation*}
\begin{aligned}
 &s^{a,b}\bigl((\dd x_{i_1}\wedge\cdots\wedge\dd x_{i_a})
 \otimes(\dd x_{j_1}\wedge\cdots\wedge\dd x_{j_b})\bigr)\\
 &\quad=\sum_{r=1}^b(-1)^{r-1}
 (\dd x_{j_r}\wedge\dd x_{i_1}\wedge\cdots\wedge\dd x_{i_a})
 \otimes
 (\dd x_{j_1}\wedge\cdots\wedge\widehat{\dd x_{j_r}}
 \wedge\cdots\wedge\dd x_{j_b}),
\end{aligned}
\end{equation*}
with $i_1<\cdots<i_a$ and $j_1<\cdots<j_b$, and extend it linearly.
The hat denotes omission of that factor, and an empty wedge product equals $1$.
Thus each term moves one coordinate differential from the second form factor
to the front of the first. We set $s^{a,b}=0$ when $b=0$ or $a=d$.

We identify $L^2(\Omega;\Alt^{a,b}\mathbb{R}^d)$ with
$L^2(\Omega)\otimes\Alt^{a,b}\mathbb{R}^d$.
The algebraic linking map $s^{a,b}$ extends to double-form fields as
$S^{a,b}=\id_{L^2(\Omega)}\otimes s^{a,b}$, acting only on the
double-form factor and preserving scalar coefficients.
The following identities
\begin{equation*}
 \dd^2=0,\qquad \dd S+S\dd=0
\end{equation*}
hold distributionally.

\paragraph{Two-row reduction}
Consider two complexes $(A^\bullet,d_A)$ and
$(B^\bullet,d_B)$ linked by $S^i:B^i\to A^{i+1}$ with $d_AS=-Sd_B$ ($S^d=0$).
Their two-row BGG diagram is
\begin{equation*}
\begin{tikzcd}[column sep=2.0em,row sep=2.8em]
 0 \arrow[r] & A^0 \arrow[r,"d_A^0"]
 & A^1 \arrow[r,"d_A^1"] & \cdots \arrow[r,"d_A^{d-1}"]
 & A^d \arrow[r] & 0 \\
 0 \arrow[r] & B^0 \arrow[r,"d_B^0"] \arrow[ur,"S^0"]
 & B^1 \arrow[r,"d_B^1"] \arrow[ur,"S^1"]
 & \cdots \arrow[r,"d_B^{d-1}"] \arrow[ur,"S^{d-1}"]
 & B^d \arrow[r] & 0 .
\end{tikzcd}
\end{equation*}
For $J\in\{0,\ldots,d-1\}$, suppose $S^i$ is injective for $i\le J$, surjective for $i\ge J$, and
$S^J$ is bijective. The BGG reduction~\cite[Section 3]{AH} is
\begin{equation}\label{eq:BGG}
 V^i=\begin{cases}
 A^0,&i=0,\\
 A^i\cap\ran(S^{i-1})^\perp,&1\le i\le J,\\
 \ker S^i\subset B^i,&J<i\le d,
 \end{cases}
 \qquad
 D^i=\begin{cases}
 P_{\ran(S^i)^\perp}d_A^i,&i<J,\\
 d_B^J(S^J)^{-1}d_A^J,&i=J,\\
 d_B^i,&i>J.
 \end{cases}
\end{equation}
Orthogonal complements are taken in
the indicated ambient spaces.
The reduced spaces and operators form the single complex
\begin{equation*}
\begin{tikzcd}[column sep=2.2em]
 0 \arrow[r] & V^0 \arrow[r,"D^0"] & V^1 \arrow[r,"D^1"]
 & \cdots \arrow[r,"D^{d-1}"] & V^d \arrow[r] & 0 .
\end{tikzcd}
\end{equation*}
At degree $J$, the differential $D^J=d_B^J(S^J)^{-1}d_A^J$ follows the zigzag
\begin{equation*}
\begin{tikzcd}[column sep=4.6em,row sep=2.8em]
 V^J\subset A^J \arrow[r,"d_A^J"]
 & A^{J+1} \arrow[dl,"(S^J)^{-1}"'] \\
 B^J \arrow[r,"d_B^J"'] & B^{J+1} .
\end{tikzcd}
\end{equation*}
The differential $D^J$ takes values in $\ker S^{J+1}$.

\paragraph{Three-dimensional proxies}
In $d=3$ use cyclic bases
\[
 \begin{gathered}
 B_0=(1),\quad B_1=(\dd x,\dd y,\dd z),\\
 B_2=(\dd y\wedge\dd z,\dd z\wedge\dd x,\dd x\wedge\dd y),\quad
 B_3=(\dd x\wedge\dd y\wedge\dd z).
 \end{gathered}
\]
Let $e_i^{(a)}$ be the $i$th element of $B_a$, and identify
$\sum A_{ij}e_i^{(a)}\otimes e_j^{(b)}$ with its coefficient array $A$.
The first basis index $i$ labels rows and the second $j$ labels columns.
All three-component coefficient arrays, whether one row or one column,
are represented as column vectors.
The horizontal differential acts on the first form factor. Thus matrix
curl and divergence act columnwise, while the vector gradient is the
transpose of the Jacobian:
\begin{equation}\label{eq:column-ops}
 (\grad v)_{ij}=\partial_iv_j,\quad
 (\curl A)_{ij}=\epsilon_{i\ell r}\partial_\ell A_{rj},\quad
 (\Div A)_j=\partial_iA_{ij},
\end{equation}
with repeated indices summed and $\epsilon$ the Levi--Civita symbol,
oriented by $\epsilon_{123}=1$.

Let $I_3$ be the $3\times3$ identity matrix and write
$\mathbb{S}=\{A\in\mathbb{R}^{3\times3}:A^{\top}=A\}$, $\mathbb{T}=\{A\in\mathbb{R}^{3\times3}:\tr A=0\}$,
$\sym A=(A+A^{\top})/2$, $\skw A=(A-A^{\top})/2$, and
$\dev A=A-\tr(A)I_3/3$. Define
$\mathsf T A=A^{\top}-\tr(A)I_3$ and $\iota(c)=cI_3$.

The map $\mskw$ identifies $\mathbb{R}^3$ with the space of skew-symmetric
$3\times3$ matrices. For $v=(v_1,v_2,v_3)^{\top}$ and $A^{\top}=-A$, it and its inverse
are given by
\[
 \mskw(v)=
 \begin{pmatrix}
  0&-v_3&v_2\\
  v_3&0&-v_1\\
  -v_2&v_1&0
 \end{pmatrix},\qquad
 \mskw^{-1}(A)=
 \begin{pmatrix}A_{32}\\A_{13}\\A_{21}\end{pmatrix}.
\]
Set $\vskw=\mskw^{-1}\circ\skw$.

In these proxies, the bijective linking maps are
$S^{0,1}=\id$, $S^{1,2}=\mathsf T$, and $S^{2,3}=\id$.
For lower rows $b=1,2,3$, they occur at $J=b-1$.
Here $\mathsf T^{-1}A=A^{\top}-\tr(A)I_3/2$.

The reduced differential operators are
\begin{align}
 (\hess u)_{ij}&=\partial_i\partial_ju,&
 (\defop v)_{ij}&=\tfrac12(\partial_iv_j+\partial_jv_i),\notag\\
 \inc A&=\curl((\curl A)^{\top}),&
 \Div\Div A&=\sum_{i,j}\partial_i\partial_jA_{ij}.\label{eq:reduced-ops}
\end{align}
For symmetric $A$, $\tr(\curl A)=0$, so the general BGG differential
$\curl\mathsf T^{-1}\curl$ equals the displayed incompatibility operator.
The other first-order operators are the compositions indicated by their names.

\paragraph{Continuous complexes and regular potentials}
On $\Omega=(0,1)^3$, BGG reduction yields the Hessian, elasticity, and
div--div complexes, labeled $\mathfrak{H},\mathfrak{E},\mathfrak{D}$, respectively. We use the regular-potential results of \cite{AH,PZ2020,PZ2023}. Their initial kernels are
\begin{equation}\label{eq:initial-kernels}
 \mathcal K_{\mathfrak{H}}=\mathbb{P}_1(\Omega),\quad
 \mathcal K_{\mathfrak{E}}=\{a+b\times\bm x:a,b\in\mathbb{R}^3\},\quad
 \mathcal K_{\mathfrak{D}}=\{a+c\bm x:a\in\mathbb{R}^3,c\in\mathbb{R}\}.
\end{equation}
Here $\mathbb{P}_1(\Omega)$ denotes the space of affine scalar functions, and $H(D;\mathbb X)$ denotes
the previously defined maximal graph space with coefficient space $\mathbb X$
and no boundary conditions. A sequence is exact if the range of each map
equals the kernel of the next. The augmented continuous sequences are exact:
\[
\begin{aligned}
 0&\longrightarrow \mathcal K_{\mathfrak{H}}\hookrightarrow H(\hess;\mathbb{R})
 \xrightarrow{\hess}H(\curl;\mathbb{S})\xrightarrow{\curl}H(\Div;\mathbb{T})
 \xrightarrow{\Div}L^2(\Omega;\mathbb{R}^3)\longrightarrow0,\\[3pt]
 0&\longrightarrow \mathcal K_{\mathfrak{E}}\hookrightarrow H(\defop;\mathbb{R}^3)
 \xrightarrow{\defop}H(\inc;\mathbb{S})\xrightarrow{\inc}H(\Div;\mathbb{S})
 \xrightarrow{\Div}L^2(\Omega;\mathbb{R}^3)\longrightarrow0,\\[3pt]
 0&\longrightarrow \mathcal K_{\mathfrak{D}}\hookrightarrow H(\dev\grad;\mathbb{R}^3)
 \xrightarrow{\dev\grad}H(\sym\curl;\mathbb{T})\\
 &\xrightarrow{\sym\curl}H(\Div\Div;\mathbb{S})
 \xrightarrow{\Div\Div}L^2(\Omega)\longrightarrow0.
\end{aligned}
\]

We write $D_\star^i$, $\star\in\{\mathfrak{H},\mathfrak{E},\mathfrak{D}\}$,
$i=0,1,2$, for the successive differential operators
of each sequence.
The finite-dimensional coefficient spaces $(\mathbb X_\star^i)_{i=0}^3$
are $(\mathbb{R},\mathbb{S},\mathbb{T},\mathbb{R}^3)$, $(\mathbb{R}^3,\mathbb{S},\mathbb{S},\mathbb{R}^3)$, and
$(\mathbb{R}^3,\mathbb{T},\mathbb{S},\mathbb{R})$ for $\mathfrak{H},\mathfrak{E},\mathfrak{D}$, respectively.

Pauly and Zulehner develop regular potentials and decompositions for the
Hessian and div--div complexes~\cite{PZ2020} and the elasticity
complex~\cite{PZ2023}.
For $i=0,1,2$, let $f\in L^2(\Omega;\mathbb X_\star^{i+1})$, with
$D_\star^{i+1}f=0$ when $i<2$. There is a bounded linear map
$\mathcal R_\star^{i+1}$ on this data space, taking values in
$H^{\gamma_{\star,i}}(\Omega;\mathbb X_\star^i)$, such that
\begin{equation}\label{eq:regular-potential}
 D_\star^i\mathcal R_\star^{i+1}f=f,\qquad
 \norm[H^{\gamma_{\star,i}}]{\mathcal R_\star^{i+1}f}
 \le C_{\star,i}\norm[L^2]f,
\end{equation}
where the orders $(\gamma_{\star,0},\gamma_{\star,1},\gamma_{\star,2})$ are
$(2,1,1)$, $(1,2,1)$, and $(1,1,2)$ for $\mathfrak{H},\mathfrak{E},\mathfrak{D}$, respectively.
One linear choice maps each $f$ to its preimage of minimal
$H^{\gamma_{\star,i}}$ norm.

\subsection{Tensor-product finite element spaces}
Let $\mathcal T_m$ partition $I$ into $2^m$ equal intervals.
For $k\ge1$ set
\begin{equation}\label{eq:F}
 F_m^{[q]}=\{v\in C^{q-1}(I):v|_K\in\mathbb{P}_{k+q}(K),\ K\in\mathcal T_m\},
 \qquad q=0,1,2.
\end{equation}
Here $C^{-1}$ refers to finite element functions without any continuity constraint.
The spaces $F_m^{[0]}$, $F_m^{[1]}$, and $F_m^{[2]}$ consist of
discontinuous, continuous, and continuously differentiable piecewise
polynomials of degrees at most $k$, $k+1$, and $k+2$, respectively.

On a tensor mesh of level $m$, the full-grid spaces of~\cite{BHKS} are
\begin{equation}\label{eq:full-background}
 V_{m,\mathrm{FG}}^{a,b}
 =\bigoplus_{\bm s\in\mathcal B_a,\bm t\in\mathcal B_b}
 \left(\bigotimes_{j=1}^d F_m^{[2-s_j-t_j]}(x_j)\right)
 e_{\bm s}\otimes e_{\bm t}.
\end{equation}

In $d=3$, use the proxy identifications of \cref{sec:continuous} and abbreviate
$V^{a,b}=V_{m,\mathrm{FG}}^{a,b}$. Let $\id$ on an arrow denote the identity
map. The four de Rham rows and their algebraic links take the form~\cite[Section 4.1]{AH}
\begin{equation*}
\begin{tikzcd}[column sep=2.6em,row sep=3.0em]
 0 \arrow[r] & V^{0,0} \arrow[r,"\grad"]
 & V^{1,0} \arrow[r,"\curl"] & V^{2,0} \arrow[r,"\Div"]
 & V^{3,0} \arrow[r] & 0 \\
 0 \arrow[r] & V^{0,1} \arrow[r,"\grad"] \arrow[ur,"\id"]
 & V^{1,1} \arrow[r,"\curl"] \arrow[ur,"2\vskw"]
 & V^{2,1} \arrow[r,"\Div"] \arrow[ur,"\tr"]
 & V^{3,1} \arrow[r] & 0 \\
 0 \arrow[r] & V^{0,2} \arrow[r,"\grad"] \arrow[ur,"-\mskw"]
 & V^{1,2} \arrow[r,"\curl"] \arrow[ur,"\mathsf T"]
 & V^{2,2} \arrow[r,"\Div"] \arrow[ur,"2\vskw"]
 & V^{3,2} \arrow[r] & 0 \\
 0 \arrow[r] & V^{0,3} \arrow[r,"\grad"] \arrow[ur,"\iota"]
 & V^{1,3} \arrow[r,"\curl"] \arrow[ur,"-\mskw"]
 & V^{2,3} \arrow[r,"\Div"] \arrow[ur,"\id"]
 & V^{3,3} \arrow[r] & 0 .
\end{tikzcd}
\end{equation*}

The algebraic diagonals are $S^{a,b}:V^{a,b}\to V^{a+1,b-1}$.
Applying the BGG reduction \eqref{eq:BGG} to adjacent rows yields full-grid
discretizations of the three continuous complexes in \cref{sec:continuous}.
For any compatible full or sparse diagram, we suppress the mesh subscript
and write the resulting reduced spaces $V_\star^i$, $i=0,1,2,3$, as
\begin{align}
 &V^{0,0}\xrightarrow{\hess}V^{1,1}\cap\mathbb{S}
 \xrightarrow{\curl}V^{2,1}\cap\mathbb{T}
 \xrightarrow{\Div}V^{3,1},\label{eq:Hcomplex}\\
 &V^{0,1}\xrightarrow{\defop}V^{1,1}\cap\mathbb{S}
 \xrightarrow{\inc}V^{2,2}\cap\mathbb{S}
 \xrightarrow{\Div}V^{3,2},\label{eq:Ecomplex}\\
 &V^{0,2}\xrightarrow{\dev\grad}V^{1,2}\cap\mathbb{T}
 \xrightarrow{\sym\curl}V^{2,2}\cap\mathbb{S}
 \xrightarrow{\Div\Div}V^{3,3}.\label{eq:Dcomplex}
\end{align}

The intersections impose pointwise algebraic constraints on the coefficient
arrays. The differentials $D_\star^i$ are the same operators as in
\cref{sec:continuous}.
For each fixed $\star$, let $(a_i,b_i)$ be the bidegree at position $i$,
$0\le i\le3$, in \eqref{eq:Hcomplex}--\eqref{eq:Dcomplex}.

For example, write $X=F_m^{[2]},Y=F_m^{[1]},Z=F_m^{[0]}$.
The componentwise tensor-product structure of the stress space $V^{2,2}$ is
\begin{equation*}
 \begin{pmatrix}
 X_x\otimes Z_y\otimes Z_z&Y_x\otimes Y_y\otimes Z_z&Y_x\otimes Z_y\otimes Y_z\\
 Y_x\otimes Y_y\otimes Z_z&Z_x\otimes X_y\otimes Z_z&Z_x\otimes Y_y\otimes Y_z\\
 Y_x\otimes Z_y\otimes Y_z&Z_x\otimes Y_y\otimes Y_z&Z_x\otimes Z_y\otimes X_z
 \end{pmatrix}.
\end{equation*}
The subscripts indicate the coordinate variables.
Double divergence in \eqref{eq:Dcomplex} differentiates diagonal entries in their $C^1$ direction
and off-diagonal entries in their two $C^0$ directions, ensuring conformity.

The tensor-product BGG construction in~\cite{BHKS} also covers splines.
Related spline de Rham work addresses commuting projections~\cite{BRSV2011},
nonuniform degrees and polar geometries~\cite{TH2021}, and exactness criteria
for hierarchical B-splines~\cite{ST2026}. For simplicity of presentation, we focus
on the finite element setting; our sparse-grid construction can also be extended
to compatible spline hierarchies.

\section{Sparse-grid BGG complexes}\label{sec:sparse}
Sparse grids retain selected tensor-product detail spaces of \eqref{eq:full-background}.
To preserve the BGG diagram, differentiation must link the $C^1$, $C^0$, and
discontinuous details without increasing levels, while components coupled
by $S$ must share the same hierarchy and indices. 

\subsection{Compatible one-dimensional hierarchies}\label{sec:hierarchy}
For the spaces \eqref{eq:F}, integration gives
$\frac{\dd}{\dd x}F_m^{[q]}=F_m^{[q-1]}$ for $q=1,2$, and
\begin{equation*}
 \dim F_m^{[q]}=(k+1)2^m+q.
\end{equation*}
Let $P_m$ be the $L^2$ projection onto $F_m^{[0]}$ and set
$W_0^{[0]}=\mathbb{P}_k$, $W_m^{[0]}=F_m^{[0]}\cap(F_{m-1}^{[0]})^\perp$ for $m\ge1$.
Each parent interval $K=[a,b]\in\mathcal T_{m-1}$ is split at its midpoint.
Let $Z_K$ consist of piecewise $\mathbb{P}_k$ functions on its two children satisfying
\begin{equation}\label{eq:moments}
 \int_K zp=0\qquad(p\in\mathbb{P}_k(K)).
\end{equation}
After zero extension to $I$, these local spaces give
$W_m^{[0]}=\bigoplus_{K\in\mathcal T_{m-1}}Z_K$.
Each $Z_K$ has dimension $k+1$. Choose an $L^2(K)$-orthonormal Alpert basis
$\{\psi_{m,K,r}\}_{r=0}^k$ by affine scaling of a common reference basis
on $[0,1]$, and extend it by zero outside $K$.
This is the discontinuous hierarchy of~\cite[Section 2.2]{HL}.
For $x\in K$, define the first and second primitives by
\[
 J_K^{[1]}z(x)=\int_a^xz(t)\,\dd t,\qquad
 J_K^{[2]}z(x)=\int_a^x(x-t)z(t)\,\dd t.
\]
Extend both primitives by zero outside $K$ and write $J_K^{[0]}z=z$.
For $m\ge1$, the three detail spaces are
\[
 W_m^{[q]}=\bigoplus_{K\in\mathcal T_{m-1}}
 \spn\{J_K^{[q]}\psi_{m,K,r}:0\le r\le k\},\qquad q=0,1,2.
\]
For $m\ge1$ and $z=\sum_Kz_K\in W_m^{[0]}$, $z_K\in Z_K$, the notation
$J^{[q]}z:=\sum_KJ_K^{[q]}z_K$ means integration separately on each parent,
so $W_m^{[q]}=J^{[q]}W_m^{[0]}$. 

At level zero, choose an $L^2(I)$-orthonormal basis
$\{\psi_{0,r}\}_{r=0}^k$ of $\mathbb{P}_k$ and let $J_I^{[q]}$ denote the same
primitive operators on $I$. We use the integrated bases
\[
 \begin{aligned}
 W_0^{[0]}&=\spn\{\psi_{0,r}:0\le r\le k\}=\mathbb{P}_k,\\
 W_0^{[1]}&=\spn\{1,J_I^{[1]}\psi_{0,r}:0\le r\le k\}=\mathbb{P}_{k+1},\\
 W_0^{[2]}&=\spn\{1,x,J_I^{[2]}\psi_{0,r}:0\le r\le k\}=\mathbb{P}_{k+2}.
 \end{aligned}
\]
The added modes $1$ and $x$ supply the integration constants. 

\begin{lemma}[Local support and hierarchy]\label{lem:hierarchy}
For $k\ge1$, $m\ge1$, $K=[a,b]\in\mathcal T_{m-1}$, and $z\in Z_K$, we have $J_K^{[q]}z\in F_m^{[q]}$ and $(J_K^{[q]}z)^{(r)}(a)=(J_K^{[q]}z)^{(r)}(b)=0$ for $q=1,2$, $r=0,\ldots,q-1$.
Moreover,
\begin{equation}\label{eq:hierarchy}
 F_M^{[q]}=\bigoplus_{m=0}^MW_m^{[q]},\qquad
 \frac{\dd}{\dd x}W_m^{[q]}=W_m^{[q-1]}\quad(q=1,2).
\end{equation}
For $m\ge1$,
$\dim W_m^{[q]}=(k+1)2^{m-1}$.
\end{lemma}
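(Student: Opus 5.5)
The plan is to first establish the local statement on a single parent interval, then assemble the global decomposition by a dimension count, and finally deduce the derivative identity from the local one.

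\emph{Local statement.} Fix $K=[a,b]\in\mathcal T_{m-1}$ and $z\in Z_K$. Since $z$ is piecewise $\mathbb{P}_k$ on the two children of $K$, the primitive $J_K^{[1]}z$ is continuous on $K$ and piecewise $\mathbb{P}_{k+1}$. Likewise $J_K^{[2]}z$ is $C^1$ on $K$ and piecewise $\mathbb{P}_{k+2}$, because $(J_K^{[2]}z)'=J_K^{[1]}z$.

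The boundary values come from the moment conditions \eqref{eq:moments}:
\begin{itemize}
\item At $x=a$, all the values $(J_K^{[q]}z)^{(r)}(a)$ vanish trivially.
\item At $x=b$, testing with $p=1$ gives $J_K^{[1]}z(b)=\int_K z=0$.
\item Since $k\ge1$, we may also test with $p=b-t$, which gives $J_K^{[2]}z(b)=\int_K(b-t)z(t)\,\dd t=0$.
\item Finally $(J_K^{[2]}z)'(b)=J_K^{[1]}z(b)=0$.
\end{itemize}
Hence the zero extension of $J_K^{[q]}z$ is $C^{q-1}$ on $I$, and it is piecewise $\mathbb{P}_{k+q}$ on $\mathcal T_m$. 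This proves $J_K^{[q]}z\in F_m^{[q]}$.

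\emph{Dimensions and the derivative identity.} The map $J_K^{[q]}$ is injective on $Z_K$, since differentiating $q$ times recovers $z$. The supports for distinct $K$ are disjoint up to endpoints, so $\dim W_m^{[q]}=(k+1)2^{m-1}$. For the derivative identity, $\frac{\dd}{\dd x}J_K^{[q]}z=J_K^{[q-1]}z$ holds on $K$, and both sides vanish outside $K$. Summing over $K$ gives $\frac{\dd}{\dd x}W_m^{[q]}=W_m^{[q-1]}$ for $m\ge1$. For $m=0$, differentiating the listed bases gives $\mathbb{P}_{k+q}'=\mathbb{P}_{k+q-1}$ directly.

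\emph{Decomposition $F_M^{[q]}=\bigoplus_{m\le M}W_m^{[q]}$.} For $q=0$ this is the standard orthogonal multilevel decomposition: $W_m^{[0]}$ is the orthogonal complement of $F_{m-1}^{[0]}$ in $F_m^{[0]}$, and the local description $W_m^{[0]}=\bigoplus_K Z_K$ is assumed. For $q=1,2$ I would argue as follows.
\begin{enumerate}
\item Each summand lies in $F_M^{[q]}$, by the local statement and the nestedness $F_m^{[q]}\subset F_M^{[q]}$.
\item The sum of dimensions is
\[
(k+1+q)+\sum_{m=1}^M(k+1)2^{m-1}=(k+1)2^M+q=\dim F_M^{[q]}.
\]
\item It therefore suffices to show that the sum is direct, or equivalently that the sum spans $F_M^{[q]}$.
\end{enumerate}

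For spanning, I would argue by induction on $q$ via differentiation. Take $v\in F_M^{[q]}$. Then $v'\in F_M^{[q-1]}=\sum_m W_m^{[q-1]}$ by the induction hypothesis. By the derivative identity, choose $w\in\sum_m W_m^{[q]}$ with $w'=v'$, using the level-zero preimage in $\mathbb{P}_{k+q}$ for the $m=0$ part. Then $v-w$ is constant, and the constant lies in $W_0^{[q]}$. So the sum spans $F_M^{[q]}$, and the dimension count forces it to be direct.

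The only delicate point is the $C^1$ vanishing at $b$ for $q=2$, which needs the linear moment and hence $k\ge1$. I expect this to be the main step to check carefully. Everything else is bookkeeping.
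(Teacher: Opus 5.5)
Your proof is correct. The local part matches the paper: endpoint vanishing from the moments with $p=1$ and $p=b-t$, injectivity of $J_K^{[q]}$, and disjoint supports giving $\dim W_m^{[q]}$.

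The decomposition step runs the other way from the paper's.
\begin{itemize}
\item The paper proves \emph{independence} one level at a time. For $g\in F_{m-1}^{[2]}\cap W_m^{[2]}$ it gets $g''\in F_{m-1}^{[0]}\cap W_m^{[0]}=\{0\}$, so $g$ is affine, and the vanishing endpoint data of elements of $W_m^{[2]}$ force $g=0$. Combined with $\dim W_m^{[2]}=\dim F_m^{[2]}-\dim F_{m-1}^{[2]}$, this yields $F_m^{[2]}=F_{m-1}^{[2]}\oplus W_m^{[2]}$, and induction on $m$ finishes.
\item You instead prove \emph{spanning} of $F_M^{[q]}$. You lift $v'$ through the $(q-1)$-decomposition using the derivative identity, absorb the leftover constant into $W_0^{[q]}$, and let the total dimension count $(k+1)2^M+q$ supply directness. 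This is an induction on $q$ rather than on $m$.
\end{itemize}

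Each version has its advantages.
\begin{itemize}
\item Yours treats $q=1$ and $q=2$ uniformly, whereas the paper defers $q=0,1$ to its earlier reference.
\item Yours also makes $\frac{\dd}{\dd x}W_m^{[q]}=W_m^{[q-1]}$, which the paper leaves implicit in $(J^{[2]}z)'=J^{[1]}z$, the actual engine of the argument.
\item The paper's version needs only the $q=0$ splitting. It kills the kernel of $\frac{\dd^2}{\dd x^2}$ inside $W_m^{[2]}$ with the endpoint data, where you fill the kernel of $\frac{\dd}{\dd x}$ with the constants in $W_0^{[q]}$.
\end{itemize}
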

\begin{proof}
The cases $q=0,1$ are established in
\cite[Section 2.2]{HL}.
Fix $m\ge1$, $K=[a,b]\in\mathcal T_{m-1}$, and $z\in Z_K$.
For $q=2$, \eqref{eq:moments} with $p=1,t$ gives
\[
 J_K^{[1]}z(b)=\int_Kz=0,\qquad
 J_K^{[2]}z(b)=\int_K(b-t)z(t)\,\dd t=0.
\]
Together with the zero left endpoint data and $(J_K^{[2]}z)'=J_K^{[1]}z$, this shows
that the zero extension belongs to $F_m^{[2]}$.
If $g\in F_{m-1}^{[2]}\cap W_m^{[2]}$, then
\[
 g''\in F_{m-1}^{[0]}\cap W_m^{[0]}=\{0\}.
\]
Thus $g$ is affine, and its zero endpoint data give $g=0$.
Moreover, $(J^{[2]}z)''=z$ implies
\[
 \dim W_m^{[2]}=\dim W_m^{[0]}=(k+1)2^{m-1}
 =\dim F_m^{[2]}-\dim F_{m-1}^{[2]}.
\]
Hence $F_m^{[2]}=F_{m-1}^{[2]}\oplus W_m^{[2]}$, and induction proves
\eqref{eq:hierarchy}. 
\end{proof}
The derivative maps in \eqref{eq:hierarchy} are bijective for $m\ge1$.
For $m=0$, they are surjective and their kernels consist of constant functions.

Let $\{\widehat\psi_r\}_{r=0}^k$ be an $L^2(0,1)$-orthonormal basis
of $Z_{[0,1]}$, the reference detail space on the two halves of $[0,1]$.
On $K=[a,b]$, let $\xi=(x-a)/|K|$. For $x\in K$,
\begin{equation*}
 J_K^{[q]}\psi_{m,K,r}(x)
 =|K|^{q-1/2}(J_I^{[q]}\widehat\psi_r)(\xi),\qquad q=0,1,2.
\end{equation*}
The factor $|K|^{-1/2}$ preserves the $L^2$ normalization of the Alpert basis. 
The primitives are not $L^2$-orthonormal, while
the basis of $W_m^{[0]}$ is $L^2$-orthonormal.

\subsection{Canonical interpolation}
The degrees of freedom (dofs) in \cref{tab:canonical-dofs}, specified on each
cell $K=[a,b]$, define the one-dimensional interpolation operators
$\Pi_m^{[q]}:H^q(I)\to F_m^{[q]}$.
\begin{table}[htbp]
\centering
\caption{Dofs for the one-dimensional canonical interpolants $\Pi_m^{[q]}$.}
\label{tab:canonical-dofs}
\begin{tabular}{ccc}
\toprule
Space&Endpoint values&Interior moments\\\midrule
$\mathbb{P}_k$&none&$\int_Kvp$, $p\in\mathbb{P}_k$\\
$\mathbb{P}_{k+1}$&$v(a),v(b)$&$\int_Kvp$, $p\in\mathbb{P}_{k-1}$\\
$\mathbb{P}_{k+2}$&$v(a),v'(a),v(b),v'(b)$&$\int_Kvp$, $p\in\mathbb{P}_{k-2}$\\\bottomrule
\end{tabular}
\end{table}

\begin{proposition}[Canonical projections and commutation]\label{prop:interpolation}
For $q=0,1,2$, the canonical interpolants are projections and satisfy
\begin{equation}\label{eq:Pi-commute}
 \frac{\dd}{\dd x}\Pi_m^{[q]}=\Pi_m^{[q-1]}\frac{\dd}{\dd x}\quad(q=1,2),\qquad
 \Pi_m^{[q]}\Pi_l^{[q]}=\Pi_{\min(m,l)}^{[q]}.
\end{equation}
\end{proposition}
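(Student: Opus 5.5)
Throughout, the plan is to work cell by cell, since every degree of freedom in \cref{tab:canonical-dofs} is attached to a single cell $K=[a,b]\in\mathcal T_m$.

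\emph{Unisolvence and projection property.} On each cell, the dofs for $\mathbb{P}_{k+q}(K)$ number $(k+1)+q$ for $q=0,1,2$, which equals $\dim\mathbb{P}_{k+q}(K)$. For $q=0$ unisolvence is immediate. For $q=1,2$, take $v\in\mathbb{P}_{k+q}(K)$ with all dofs zero. Then $v=\beta_K^q w$, where $\beta_K=(x-a)(b-x)$ and $w\in\mathbb{P}_{k-q}(K)$. Testing the interior moments with $p=w$ gives $\int_K\beta_K^q w^2=0$, so $w=0$. The shared endpoint values and derivatives then glue the local interpolants into a $C^{q-1}$ function, so $\Pi_m^{[q]}:H^q(I)\to F_m^{[q]}$ is well defined. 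Since the dofs are taken in $F_m^{[q]}$ itself, unisolvence gives $\Pi_m^{[q]}v=v$ for $v\in F_m^{[q]}$, i.e.\ $\Pi_m^{[q]}$ is a projection.

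\emph{Commutation with differentiation.} Fix $q\in\{1,2\}$ and $v\in H^q(I)$, and set $w=(\Pi_m^{[q]}v)'-\Pi_m^{[q-1]}v'$. Then $w\in F_m^{[q-1]}$, and it suffices to show that every $(q-1)$-dof of $w$ vanishes on every cell. There are three cases.
\begin{itemize}
\item Endpoint dofs for $q=2$: these are $w(a)$ and $w(b)$. They vanish because $(\Pi v)'(a)=v'(a)$ and $(\Pi^{[1]}v')(a)=v'(a)$, and likewise at $b$.
\item Moment dofs against $p'$ with $p\in\mathbb{P}_{k+q-1}(K)$ and $p(a)=p(b)=0$: integrating by parts, $\int_K (\Pi v)'p=-\int_K\Pi v\,p'$. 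Since $p'\in\mathbb{P}_{k+q-2}$, this is a moment of lower degree than the $q$-dofs provide. One then compares with $\int_K v'p=-\int_Kv\,p'$ and the corresponding moments of $\Pi^{[q-1]}v'$.
\item General moments: I would rather decompose a test function $p\in\mathbb{P}_{k+q-1-(q-1)\cdot 2}$ (the interior moment degree of the $[q-1]$ space) as $p=c+\tilde p'$ and integrate by parts. This gives $\int_K w\,p=\bigl[\text{boundary terms}\bigr]-\int_K(\ldots)\tilde p'$, where the boundary terms involve $\Pi v-v$ at endpoints, which vanish. The degree of $\tilde p$ matches the interior moments of the $q$-dofs shifted by one, plus endpoint-supported pieces. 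Concretely, write $\int_K(\Pi v)'p=[\Pi v\,p]_a^b-\int_K\Pi v\,p'$ with $p'\in\mathbb{P}_{k-q-1}\subset\mathbb{P}_{k-q}$, so this equals the same expression for $v$. Likewise $\int_K\Pi^{[q-1]}v'\,p=\int_K v'p$, because the interior moment degree of the $[q-1]$-dofs is $k-q+1$, which contains $p$.
\end{itemize}
In the endpoint terms, $\Pi v(a)=v(a)$. So all $(q-1)$-dofs of $w$ vanish, and $w=0$. The main care is in checking the degree bookkeeping: the interior moments for $[q-1]$ have degree $k-(q-1)$, and after one integration by parts these become degree $k-q$, exactly the interior moments available for $[q]$.

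\emph{Nesting.} For $l\le m$, $F_l^{[q]}\subset F_m^{[q]}$ and $\Pi_m^{[q]}$ is a projection, so $\Pi_m^{[q]}\Pi_l^{[q]}=\Pi_l^{[q]}$. For $l>m$ we need $\Pi_m^{[q]}\Pi_l^{[q]}=\Pi_m^{[q]}$, i.e.\ every level-$m$ dof of $\Pi_l^{[q]}v$ equals that of $v$. The endpoint values and derivatives at level-$m$ nodes are level-$l$ nodes, so they are preserved. The step I expect to be the main obstacle is the interior moments $\int_K(\Pi_l v-v)p$ with $p\in\mathbb{P}_{k-q}(K)$ and $K\in\mathcal T_m$. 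I would split $K$ into its level-$l$ children $K'$ and argue by induction on the number of derivatives. For $q=0$, the moments are preserved on each child, since $p|_{K'}\in\mathbb{P}_k$. For $q\ge1$, write $p=P'$ with $P\in\mathbb{P}_{k-q+1}$ and integrate by parts on $K$: $\int_K(\Pi_lv-v)P'=[(\Pi_lv-v)P]_a^b-\int_K(\Pi_lv-v)'P$. The boundary term vanishes, and the commutation just proved gives $(\Pi_l^{[q]}v-v)'=\Pi_l^{[q-1]}v'-v'$. The remaining integral is then a level-$m$ moment of degree $k-(q-1)$ against a $[q-1]$ interpolant, which vanishes by the $q-1$ case (moments of degree $\le k-q+1$ on $K$). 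This induction closes the argument.
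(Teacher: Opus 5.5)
Your proposal is correct. The bubble-function unisolvence argument ($v=\beta_K^q w$ with $w\in\mathbb{P}_{k-q}(K)$, tested against $p=w$) fills in what the paper only asserts. Your ``Concretely'' computation for the commutation is the paper's argument: one integration by parts against $p\in\mathbb{P}_{k+1-q}(K)$, matched endpoint values, the interior moments of degree $k-q$, and $w(a)=w(b)=0$ when $q=2$. Two small slips do not affect the proof, since the final computation covers every interior dof of the $[q-1]$ space. Your second bullet is an unfinished false start: for $q=2$, $p'\in\mathbb{P}_{k}$ is not among the available moments against $\mathbb{P}_{k-2}$. Also, ``$p'\in\mathbb{P}_{k-q-1}$'' should read $p'\in\mathbb{P}_{k-q}$.

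Where you genuinely differ is the composition identity. The paper iterates the commutation to obtain $(\Pi_m^{[q]}f)^{(q)}=P_mf^{(q)}$. Hence $\Pi_m^{[q]}\Pi_l^{[q]}f-\Pi_{\min(m,l)}^{[q]}f$ has $q$th derivative $(P_mP_l-P_{\min(m,l)})f^{(q)}=0$, so it is a polynomial of degree below $q$. It vanishes because every interpolant reproduces the value (and, for $q=2$, the derivative) of its argument at $x=0$. You instead treat $l\le m$ by nesting plus the projection property. For $l>m$ you show by induction on $q$ that $\Pi_l^{[q]}$ preserves every level-$m$ dof, integrating by parts on each coarse cell and using the commutation to pass to $q-1$. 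The paper's route is shorter, reducing everything to $P_mP_l=P_{\min(m,l)}$ through one global kernel argument. Yours is cellwise and makes explicit the structural fact behind the identity: finer canonical interpolants preserve all coarse dofs.
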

\begin{proof}
The stated dofs are unisolvent, so the canonical interpolants
are projections, with $\Pi_m^{[0]}=P_m$.
For $q=1,2$, set $e=\Pi_m^{[q]}f-f$. On each cell $K=[a,b]$ and
for $p\in\mathbb{P}_{k+1-q}(K)$, the endpoint data and interior moments give
\[
 \int_K e'p=[ep]_a^b-\int_Kep'=0.
\]
For $q=2$, also $e'(a)=e'(b)=0$. Thus $(\Pi_m^{[q]}f)'$ has the
dofs of $\Pi_m^{[q-1]}f'$, proving commutation.
Iteration gives $(\Pi_m^{[q]}f)^{(q)}=P_mf^{(q)}$.
This derivative identity, together with $P_mP_l=P_{\min(m,l)}$ and
preservation of the initial derivatives of orders below $q$, proves the
composition identity in \eqref{eq:Pi-commute}.
\end{proof}

We set $\Pi_{-1}^{[q]}=0$ and define the one-dimensional differences
$\Delta_m^{[q]}:=\Pi_m^{[q]}-\Pi_{m-1}^{[q]}$, $m\in\mathbb{N}_0$, $q=0,1,2$.
Their ranges are $W_m^{[q]}$, and \eqref{eq:Pi-commute} gives
\begin{equation}\label{eq:Delta}
 \Delta_m^{[q]}\Delta_l^{[q]}=\delta_{ml}\Delta_m^{[q]},\qquad
 \frac{\dd}{\dd x}\Delta_m^{[q]}=\Delta_m^{[q-1]}\frac{\dd}{\dd x}.
\end{equation}
Here $\delta_{ml}$ is the Kronecker delta, and the last identity is for $q=1,2$.

These identities yield sparse commutation and reproduction
 in \cref{prop:cochain} and detail decay in \cref{sec:approximation}.

\subsection{Sparse-grid double-form spaces}\label{sec:diagram}
We use the double-form bases and operators fixed in \cref{sec:background}.
Let $\mathcal L\subset\mathbb{N}_0^d$ be a nonempty finite downward-closed set: if
$\bm m\in\mathcal L$ and $\bm0\le\bm l\le\bm m$, then $\bm l\in\mathcal L$.
For $\bm q\in\{0,1,2\}^d$ define
\begin{equation}\label{eq:G}
 F_{\mathcal L}^{\bm q}=\bigoplus_{\bm m\in\mathcal L}\bigotimes_{j=1}^dW_{m_j}^{[q_j]}(x_j),
 \qquad
 V_{\mathcal L}^{a,b}=\bigoplus_{\bm s\in\mathcal B_a,\bm t\in\mathcal B_b}
 F_{\mathcal L}^{\bm2-\bm s-\bm t}\,e_{\bm s}\otimes e_{\bm t}.
\end{equation}
Following the level numbering of~\cite{HL}, for $n\in\mathbb{N}_0$ use
\begin{equation*}
 \mathcal S_n=\{\bm m\in\mathbb{N}_0^d:|\bm m|_1\le n+d-1\},\qquad
 F_n^{\bm q}=F_{\mathcal S_n}^{\bm q},\qquad V_n^{a,b}=V_{\mathcal S_n}^{a,b}.
\end{equation*}
Write $\nu_n=n+d-1$ for the total-level cutoff; $n$ is the sparse-grid
level parameter, and $\nu_n$ is the maximal one-dimensional level.
The degree parameter $k$ is suppressed. All components use the same index set.

For comparison, let $\mathcal F_n=\{0,\ldots,n\}^d$. The 1D
decomposition \eqref{eq:hierarchy} gives
\begin{equation*}
 F_{n,\mathrm{FG}}^{\bm q}
 :=\bigotimes_{j=1}^dF_n^{[q_j]}(x_j)
 =F_{\mathcal F_n}^{\bm q}.
\end{equation*}
Thus \eqref{eq:G} includes the full spaces of~\cite{BHKS} by taking
$\mathcal L=\mathcal F_n$. Full grids retain $\max_jm_j\le n$, whereas sparse grids
retain $\sum_jm_j\le\nu_n$.

A basis double form $e_{\bm s}\otimes e_{\bm t}$ has coordinate-weight vector
$\bm c=\bm s+\bm t\in\{0,1,2\}^d$. Its $j$th entry counts the occurrences
of $\dd x_j$ across the two form factors. Define
\[
 E_{\bm c}^{a,b}=\spn\{e_{\bm s}\otimes e_{\bm t}:
 \bm s+\bm t=\bm c,\ |\bm s|_1=a,\ |\bm t|_1=b\}.
\]
A nonzero term of $s^{a,b}$ replaces $(s_j,t_j)=(0,1)$ by $(1,0)$,
so it preserves the coordinate-weight vector $\bm c$. Define
$s_{\bm c}:=s^{a,b}|_{E_{\bm c}^{a,b}}:
E_{\bm c}^{a,b}\to E_{\bm c}^{a+1,b-1}$.
The notation $\left.\cdot\right|_{\bm c}$ denotes restriction to the block
of the operator's source space with coordinate-weight vector $\bm c$.
The orthogonal block decomposition is
\begin{equation}\label{eq:weight-block}
 V_{\mathcal L}^{a,b}=\bigoplus_{\bm c}F_{\mathcal L}^{\bm2-\bm c}\otimes E_{\bm c}^{a,b},
 \qquad S|_{\bm c}=\id_{F_{\mathcal L}^{\bm2-\bm c}}\otimes s_{\bm c}.
\end{equation}

\begin{proposition}[Coordinate-weight block preservation]\label{prop:weights}
The spaces in \eqref{eq:G} form a BGG subdiagram:
\[
 \dd V_{\mathcal L}^{a,b}\subset V_{\mathcal L}^{a+1,b},\qquad
 S^{a,b}V_{\mathcal L}^{a,b}\subset V_{\mathcal L}^{a+1,b-1}.
\]
The discrete links inherit the injectivity and surjectivity of $s^{a,b}$.
Their inverses, when defined, and the BGG orthogonal projections preserve
each scalar detail space.
\end{proposition}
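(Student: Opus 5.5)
We prove the two inclusions by working componentwise on scalar coefficients. The algebraic statements then follow from the block decomposition \eqref{eq:weight-block}.

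\textbf{Exterior derivative.} Take a single term $u\,e_{\bm s}\otimes e_{\bm t}$ with $u\in F_{\mathcal L}^{\bm2-\bm s-\bm t}$. It suffices to consider one detail tensor $u=\bigotimes_j w_j$ with $w_j\in W_{m_j}^{[2-s_j-t_j]}$ and $\bm m\in\mathcal L$. The exterior derivative produces the terms $\partial_j u\,(\dd x_j\wedge e_{\bm s})\otimes e_{\bm t}$, and these are nonzero only when $s_j=0$. The new first-factor index is then $\bm s+\bm e_j$, so the target coefficient space is $F_{\mathcal L}^{\bm2-\bm s-\bm t-\bm e_j}$.

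Since $s_j=0$, we have $q_j=2-t_j\in\{1,2\}$. By \eqref{eq:hierarchy} in \cref{lem:hierarchy}, $\partial_jw_j\in W_{m_j}^{[q_j-1]}$ at the \emph{same} level $m_j$. The other factors are unchanged, so $\partial_j u$ still lies in the detail tensor indexed by $\bm m\in\mathcal L$. The sign from reordering the wedge product is irrelevant. This gives $\dd V_{\mathcal L}^{a,b}\subset V_{\mathcal L}^{a+1,b}$. Note that downward-closedness is not even needed here, because differentiation never raises a level.

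\textbf{Linking maps.} Each nonzero term of $s^{a,b}$ replaces $(s_j,t_j)=(0,1)$ by $(1,0)$, so it preserves $\bm c=\bm s+\bm t$. The scalar coefficient space $F_{\mathcal L}^{\bm2-\bm c}$ depends only on $\bm c$, not on $(\bm s,\bm t)$ separately. Hence $V_{\mathcal L}^{a,b}$ splits orthogonally as $\bigoplus_{\bm c}F_{\mathcal L}^{\bm2-\bm c}\otimes E_{\bm c}^{a,b}$. The inner products decouple because the $e_{\bm s}\otimes e_{\bm t}$ are orthonormal. On each block,
\[
S|_{\bm c}=\id\otimes s_{\bm c},
\]
so $S^{a,b}V_{\mathcal L}^{a,b}\subset V_{\mathcal L}^{a+1,b-1}$.

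\textbf{Injectivity, surjectivity and inverses.} The map $s^{a,b}$ is block diagonal over $\bm c$, so its injectivity or surjectivity is equivalent to that of every $s_{\bm c}$. For a fixed finite-dimensional space $F$, the map $\id_F\otimes s_{\bm c}$ is injective or surjective exactly when $s_{\bm c}$ is, and this transfers to the discrete $S$. When $S^J$ is bijective, its inverse acts blockwise as $\id\otimes s_{\bm c}^{-1}$ and therefore preserves each scalar space.

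\textbf{BGG projections.} The subspaces $\ran(S)^\perp$ and $\ker S$ decompose as $\bigoplus_{\bm c}F\otimes(\ran s_{\bm c})^\perp$ and $\bigoplus_{\bm c}F\otimes\ker s_{\bm c}$. This uses the orthogonality of the blocks together with the identity $(F\otimes R)^\perp\cap(F\otimes E)=F\otimes(R^{\perp}\cap E)$. Consequently the orthogonal projections equal $\id\otimes P_{(\ran s_{\bm c})^\perp}$ blockwise, and they act only on the double-form factor, leaving each scalar detail space invariant.

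The main point to get right is this orthogonal-projection identification. It relies on computing complements in the ambient $L^2\otimes\Alt$ space, and there it holds because $L^2(\Omega)\otimes E_{\bm c}$ blocks are mutually orthogonal and the $S$-action is the identity on scalar factors. The only index check needed elsewhere is that $s_j=0$ forces $q_j\ge1$ in the derivative step.
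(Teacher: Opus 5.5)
Your proposal is correct and follows the paper's proof essentially step by step. For $\dd$ you use the same-level derivative property of \cref{lem:hierarchy}, with $s_j=0$ forcing $q_j\ge1$, and for the links, their inverses, and the BGG projections you use the coordinate-weight decomposition \eqref{eq:weight-block} with $S|_{\bm c}=\id\otimes s_{\bm c}$; your explicit identity $(F\otimes R)^\perp\cap(F\otimes E)=F\otimes(R^\perp\cap E)$ spells out what the paper summarizes as ``the tensor-product inner product'' and gives the same blockwise projections as the paper's convention of taking complements inside the discrete blocks.
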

\begin{proof}
Set $\bm q=\bm2-\bm s-\bm t$. For a nonzero $j$th term of $\dd$,
\cref{lem:hierarchy} gives
\[
 s_j=0,\qquad q_j=2-t_j\ge1,\qquad
 \partial_jW_{m_j}^{[q_j]}\subset W_{m_j}^{[q_j-1]}.
\]
Consequently,
\[
\begin{aligned}
 \partial_j\left(\bigotimes_{\ell=1}^dW_{m_\ell}^{[q_\ell]}\right)
 &\subset\bigotimes_{\ell=1}^dW_{m_\ell}^{[q_\ell-\delta_{\ell j}]},\\
 q_\ell-\delta_{\ell j}&=2-(s_\ell+\delta_{\ell j})-t_\ell.
\end{aligned}
\]
These are weak derivatives by the $H^1$ regularity in $x_j$.
Since $\bm m$ is unchanged,
\[
 \dd V_{\mathcal L}^{a,b}\subset V_{\mathcal L}^{a+1,b}.
\]

Fix a coordinate-weight vector $\bm c$ and write
\[
 F_{\bm c}=F_{\mathcal L}^{\bm2-\bm c},\qquad
 S_{\bm c}=S|_{\bm c}.
\]
Equation \eqref{eq:weight-block} gives
$S^{a,b}V_{\mathcal L}^{a,b}\subset V_{\mathcal L}^{a+1,b-1}$ and
\[
 \ker S_{\bm c}=F_{\bm c}\otimes\ker s_{\bm c},\qquad
 \ran S_{\bm c}=F_{\bm c}\otimes\ran s_{\bm c}.
\]
Injectivity and surjectivity are therefore inherited on each block and,
by direct sums, on the full discrete spaces.

For bijective $s_{\bm c}$, we have $S_{\bm c}^{-1}=\id_{F_{\bm c}}\otimes s_{\bm c}^{-1}$.
The tensor-product inner product also gives
\[
\begin{aligned}
 P_{\ker S_{\bm c}}&=\id_{F_{\bm c}}\otimes P_{\ker s_{\bm c}},\\
 P_{(\ran S_{\bm c})^\perp}
 &=\id_{F_{\bm c}}\otimes P_{(\ran s_{\bm c})^\perp}.
\end{aligned}
\]
Complements are taken within the corresponding source or target blocks.
All these operators act only on the form factor, so each scalar detail
space and its level index remain unchanged.
\end{proof}

\subsection{Commuting sparse interpolation}
For $\bm q\in\mathbb{N}_0^d$, write
\[
 H_{\rm mix}^{\bm q}(\Omega)=\{u:\partial^{\bm\alpha}u\in L^2,
                    \ 0\le\bm\alpha\le\bm q\},\quad
 \norm[H_{\rm mix}^{\bm q}]u^2=\sum_{0\le\bm\alpha\le\bm q}
                              \norm[L^2]{\partial^{\bm\alpha}u}^2.
\]
This is the Hilbert tensor product of the one-dimensional $H^{q_j}(I)$ spaces with $H^0(I)=L^2(I)$.
Let $\Pi_{j,m}^{[q]}$ and $\Delta_{j,m}^{[q]}$ denote $\Pi_m^{[q]}$ and
$\Delta_m^{[q]}$, respectively, acting only in $x_j$.
Operators acting in different coordinates commute; their products denote composition.
On this space, define the tensor-product details and sparse interpolant by
\begin{equation*}
 \Delta_{\bm m}^{\bm q}:=\prod_{j=1}^d\Delta_{j,m_j}^{[q_j]},\qquad
 I_{\mathcal L}^{\bm q}=\sum_{\bm m\in\mathcal L}\Delta_{\bm m}^{\bm q}.
\end{equation*}
For double forms, define the mixed Sobolev space
\[
 \mathcal H^{a,b}_{\rm mix}=\bigoplus_{\bm s\in\mathcal B_a,\bm t\in\mathcal B_b}
 H_{\rm mix}^{\bm2-\bm s-\bm t}(\Omega)\,e_{\bm s}\otimes e_{\bm t}.
\]
It contains $V_{\mathcal L}^{a,b}$.
The operator $I_{\mathcal L}^{a,b}$ applies $I_{\mathcal L}^{\bm2-\bm s-\bm t}$
to each scalar coefficient $u_{\bm s,\bm t}$.
For $\mathcal L=\mathcal S_n$, abbreviate $I_n^{\bm q}=I_{\mathcal S_n}^{\bm q}$ and
$I_n^{a,b}=I_{\mathcal S_n}^{a,b}$.

\begin{proposition}[Canonical cochain projector]\label{prop:cochain}
The operator $I_{\mathcal L}^{a,b}:\mathcal H^{a,b}_{\rm mix}\to V_{\mathcal L}^{a,b}$ is a projection and satisfies 
\begin{equation}\label{eq:commuting}
 \dd I_{\mathcal L}^{a,b}=I_{\mathcal L}^{a+1,b}\dd,\qquad
 S^{a,b}I_{\mathcal L}^{a,b}=I_{\mathcal L}^{a+1,b-1}S^{a,b}.
\end{equation}
It also commutes with the BGG algebraic projections and the inverses of bijective linking maps.
\end{proposition}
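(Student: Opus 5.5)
The proof works one scalar coefficient at a time and relies on the one-dimensional identities \eqref{eq:Delta}, together with the coordinate-weight block structure \eqref{eq:weight-block}.

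\emph{Projection and range.} Each $\Delta_{j,m}^{[q]}$ has range $W_m^{[q]}$, so $\Delta_{\bm m}^{\bm q}$ maps $H_{\rm mix}^{\bm q}$ into $\bigotimes_jW_{m_j}^{[q_j]}$. Here we use that $\Pi_m^{[q]}$ is bounded on $H^q(I)$, so the tensor-product operators are well defined on the Hilbert tensor product. Hence $I_{\mathcal L}^{\bm q}$ maps into $F_{\mathcal L}^{\bm q}$. The one-dimensional relation $\Delta_m\Delta_l=\delta_{ml}\Delta_m$ applies factorwise, because operators in different coordinates commute. This gives $\Delta_{\bm m}^{\bm q}\Delta_{\bm l}^{\bm q}=\delta_{\bm m\bm l}\Delta_{\bm m}^{\bm q}$, so
\[
I_{\mathcal L}^{\bm q}I_{\mathcal L}^{\bm q}=\sum_{\bm m,\bm l\in\mathcal L}\Delta_{\bm m}^{\bm q}\Delta_{\bm l}^{\bm q}=I_{\mathcal L}^{\bm q}.
\]
Consequently $\Delta_{\bm m}^{\bm q}$ is the identity on $\bigotimes_jW_{m_j}^{[q_j]}$ and vanishes on the other detail blocks. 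Therefore $I_{\mathcal L}^{\bm q}$ is the identity on $F_{\mathcal L}^{\bm q}$ and is a projection onto it. Applying this coefficientwise proves that $I_{\mathcal L}^{a,b}$ is a projection onto $V_{\mathcal L}^{a,b}$.

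\emph{Commutation with $\dd$.} Fix a nonzero $j$th term of $\dd$ acting on $u_{\bm s,\bm t}$. As in \cref{prop:weights}, $q_j\ge1$ and the target exponent is $\bm q-\bm e_j$. By \eqref{eq:Delta}, $\partial_j\Delta_{j,m_j}^{[q_j]}=\Delta_{j,m_j}^{[q_j-1]}\partial_j$. Moreover, $\partial_j$ commutes with the factors $\Delta_{\ell,m_\ell}^{[q_\ell]}$ for $\ell\ne j$, since these act in other variables. Justifying this for weak derivatives on $H_{\rm mix}$ is a routine density argument. It follows that
\[
\partial_jI_{\mathcal L}^{\bm q}u=I_{\mathcal L}^{\bm q-\bm e_j}\partial_ju ,
\]
with the same index set $\mathcal L$. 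The form factor of this term is $\dd x_j\wedge e_{\bm s}\otimes e_{\bm t}$, which has exponent $\bm2-\bm s-\bm t-\bm e_j$. This is exactly the exponent used by $I^{a+1,b}_{\mathcal L}$ on that coefficient. Summing over terms, and noting that several terms may land on the same basis element with signs, gives $\dd I_{\mathcal L}^{a,b}=I_{\mathcal L}^{a+1,b}\dd$. The coefficientwise interpolant is linear, so the sums and signs pass through.

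\emph{Algebraic maps.} By \eqref{eq:weight-block}, on $\mathcal H_{\rm mix}^{a,b}$ we can write $I_{\mathcal L}^{a,b}=\bigoplus_{\bm c}I_{\mathcal L}^{\bm2-\bm c}\otimes\id_{E_{\bm c}^{a,b}}$, since every coefficient in block $\bm c$ uses the same scalar operator. Likewise, $S|_{\bm c}=\id\otimes s_{\bm c}$. The map $S$ preserves $\bm c$, so on each block both composites equal $I_{\mathcal L}^{\bm2-\bm c}\otimes s_{\bm c}$. The same argument applies to $\id\otimes s_{\bm c}^{-1}$, to $\id\otimes P_{\ker s_{\bm c}}$, and to $\id\otimes P_{(\ran s_{\bm c})^\perp}$, whose block forms were identified in the proof of \cref{prop:weights}. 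These maps also preserve the mixed Sobolev spaces, because they act only on the form factor.

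The only delicate step is the bookkeeping. One must check that for each $\bm c$ the exponent $\bm2-\bm c$ is shared by source and target, and that the exponent drop in $\dd$ is in coordinate $j$ only. The main obstacle is the analytic justification that $\partial_j$ commutes with the tensor-product operators on the mixed spaces. I would first verify it on algebraic tensors $\prod_\ell f_\ell(x_\ell)$, and then extend by density and the boundedness of the one-dimensional interpolants.
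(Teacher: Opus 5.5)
Your proposal is correct and follows the paper's proof essentially step for step. Idempotency and range come from the tensorized relation $\Delta_{\bm m}^{\bm q}\Delta_{\bm l}^{\bm q}=\delta_{\bm m\bm l}\Delta_{\bm m}^{\bm q}$, the $\dd$ commutation from the single-coordinate identity $\partial_jI_{\mathcal L}^{\bm q}=I_{\mathcal L}^{\bm q-\bm e_j}\partial_j$ with $q_j=2-t_j\ge1$, and the algebraic commutation from the blockwise form $\id\otimes t_{\bm c}$ with a shared scalar interpolant. Your extra remarks (boundedness of $\Pi_m^{[q]}$ on $H^q(I)$, the density argument for weak derivatives, and preservation of the mixed spaces by the algebraic maps) only make explicit details the paper leaves implicit.
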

\begin{proof}
By \eqref{eq:Delta},
$\Delta_{\bm m}^{\bm q}\Delta_{\bm l}^{\bm q}
=(\prod_j\delta_{m_jl_j})\Delta_{\bm m}^{\bm q}$, hence
\[
\begin{aligned}
 (I_{\mathcal L}^{\bm q})^2
 &=\sum_{\bm m,\bm l\in\mathcal L}\Delta_{\bm m}^{\bm q}\Delta_{\bm l}^{\bm q}
 =I_{\mathcal L}^{\bm q},\\
 \ran I_{\mathcal L}^{\bm q}
 &=\bigoplus_{\bm m\in\mathcal L}\ran \Delta_{\bm m}^{\bm q}
 =F_{\mathcal L}^{\bm q}.
\end{aligned}
\]
This proves the projection property coefficientwise.
For the $j$th unit vector $\bm e_j$, \eqref{eq:Delta} gives
\[
 \partial_j I_{\mathcal L}^{\bm q}
 =\sum_{\bm m\in\mathcal L}\Delta_{\bm m}^{\bm q-\bm e_j}\partial_j
 =I_{\mathcal L}^{\bm q-\bm e_j}\partial_j
 \qquad(q_j\ge1).
\]
Each nonzero term of $\dd$ has $q_j=2-t_j\ge1$, so
$\dd I_{\mathcal L}^{a,b}=I_{\mathcal L}^{a+1,b}\dd$.
For the algebraic commutation, fix a coordinate-weight vector $\bm c$ and set
$\bm q=\bm2-\bm c$. By \eqref{eq:weight-block} and the proof of
\cref{prop:weights}, each relevant block map has the form
$\id\otimes t_{\bm c}$, where $t_{\bm c}:E\to F$ acts on the form spaces.
The source and target blocks use the same scalar interpolant, so
\[
 (\id\otimes t_{\bm c})(I_{\mathcal L}^{\bm q}\otimes\id_E)
 =I_{\mathcal L}^{\bm q}\otimes t_{\bm c}
 =(I_{\mathcal L}^{\bm q}\otimes\id_F)(\id\otimes t_{\bm c}).
\]
This proves commutation with $S$, the algebraic projections, and the
inverses of bijective links, blockwise and hence globally.
\end{proof}

\subsection{Reduced sparse complexes}\label{sec:reduced-sparse}
By \cref{prop:weights}, the kernels, range complements, and inverses used in
the BGG reduction \eqref{eq:BGG} preserve the sparse coefficient spaces and
the index set. Applying this reduction to adjacent rows of the sparse diagram
gives $V_{\star,\mathcal L}^i$ by inserting $V_{\mathcal L}^{a,b}$ in
\eqref{eq:Hcomplex}--\eqref{eq:Dcomplex}, with the same algebraic constraints.
For example, $V_{\mathfrak{E},\mathcal L}^1=V_{\mathcal L}^{1,1}\cap\mathbb{S}$ and
$V_{\mathfrak{E},\mathcal L}^2=V_{\mathcal L}^{2,2}\cap\mathbb{S}$.
Differentiation preserves the levels, and the coordinatewise continuity
ensures that the operators \eqref{eq:column-ops} and \eqref{eq:reduced-ops}
are the distributional differentials. Thus
\[
 V_{\star,\mathcal L}^i\subset H(D_\star^i;\Omega),\qquad
 D_\star^iV_{\star,\mathcal L}^i\subset V_{\star,\mathcal L}^{i+1},
 \qquad i=0,1,2,
\]
so the reduced sparse spaces form conforming subcomplexes.

Under the proxy identification, define
$\mathcal H_{{\rm mix},\star}^i=\mathcal H^{a_i,b_i}_{\rm mix}\cap L^2(\Omega;\mathbb X_\star^i)$.
The canonical interpolants preserve these constraints. Their restrictions are
\[
\begin{gathered}
 I_{\star,\mathcal L}^i:=I_{\mathcal L}^{a_i,b_i}|_{\mathcal H_{{\rm mix},\star}^i}:
 \mathcal H_{{\rm mix},\star}^i\to V_{\star,\mathcal L}^i,\\
 I_{\star,\mathcal L}^i|_{V_{\star,\mathcal L}^i}=\id,\qquad
 D_\star^iI_{\star,\mathcal L}^i=I_{\star,\mathcal L}^{i+1}D_\star^i.
\end{gathered}
\]
The commutation follows from \eqref{eq:BGG} and \eqref{eq:commuting} on these
mixed Sobolev domains. It includes the second-order differential
$D^J=d_B^J(S^J)^{-1}d_A^J$, since the interpolants commute with each factor.

For $\mathcal L=\mathcal S_n$, write $V_{\star,n}^i$ and $I_{\star,n}^i$.
For fixed $k,d$, the nonzero sparse and full-grid spaces have dimensions
\[
\begin{aligned}
 \dim F_n^{\bm q}\eqsim\dim V_{\star,n}^i
 &\eqsim2^n(n+1)^{d-1},\\
 \dim F_{n,\mathrm{FG}}^{\bm q}\eqsim\dim V_{\star,\mathcal F_n}^i
 &\eqsim2^{nd},
\end{aligned}
\]
with constants independent of $n$. The algebraic constraints in
\eqref{eq:weight-block} contribute only fixed multiplicities to these dimensions.
\section{Stable discrete potentials and exactness}\label{sec:potentials}
In this section, we consider the three reduced complexes on $\Omega=(0,1)^3$. Stable discrete potentials are constructed for the differential
operators $D_\star^0$, $D_\star^1$, and $D_\star^2$.
The discrete sequences use the augmentation spaces $\mathcal K_\star$
in \eqref{eq:initial-kernels};
their exactness is proved in \cref{thm:exact}. We construct commuting projections with uniform $H^1\to L^2$ bounds to
transfer the continuous regular potentials of \cref{sec:continuous} to the
sparse spaces.

\subsection{One-dimensional commuting projections}
We construct the projections from the $L^2$ orthogonal projection onto the
$C^1$ factor and the mean-zero primitive operator. This provides the
one-dimensional bounds needed to transfer regular potentials to sparse spaces.

\begin{lemma}[One-dimensional commuting projections]\label{lem:smooth-1d}
For fixed $k\ge1$ there are projections
$Q_m^{[q]}:L^2(I)\to F_m^{[q]}$, $m\ge0$, $q=0,1,2$, with
\begin{align}
 \norm[L^2]{Q_m^{[q]}f}&\le C_k\norm[L^2]f,\label{eq:Q-bound}\\
 (Q_m^{[q]}f)'&=Q_m^{[q-1]}f'
       \qquad(f\in H^1(I),\ q=1,2),\label{eq:Q-commute}\\
 \norm[L^2]{f-Q_m^{[q]}f}&\le C_k2^{-m}\norm[L^2]{f'}
       \qquad(f\in H^1(I)).\label{eq:Q-approx}
\end{align}
\end{lemma}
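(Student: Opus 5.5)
We take $Q_m^{[2]}$ to be the $L^2(I)$-orthogonal projection onto the $C^1$ space $F_m^{[2]}$, and define the other two projections by differentiating it. Let $J$ be the mean-zero primitive, $Jf(x)=\int_0^xf-\int_0^1\int_0^yf\,\dd y$, so $(Jf)'=f$, $\int_IJf=0$ and $\norm[L^2]{Jf}\lesssim\norm[L^2]f$. We set
\[
 Q_m^{[1]}f:=\bigl(Q_m^{[2]}Jf\bigr)',\qquad
 Q_m^{[0]}f:=\bigl(Q_m^{[2]}JJf\bigr)''.
\]
By \cref{lem:hierarchy}, $\frac{\dd}{\dd x}F_m^{[q]}=F_m^{[q-1]}$, so the ranges lie in $F_m^{[1]}$ and $F_m^{[0]}$.

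\emph{Projection property.} If $f\in F_m^{[1]}$, then $Jf\in F_m^{[2]}$, so $Q_m^{[2]}Jf=Jf$ and $Q_m^{[1]}f=f$. Likewise, if $f\in F_m^{[0]}$, then $JJf\in F_m^{[2]}$ and $Q_m^{[0]}f=f$. In particular $Q_m^{[1]}$ and $Q_m^{[2]}$ reproduce constants, since constants lie in both spaces.

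\emph{Commutation \eqref{eq:Q-commute}.} For $f\in H^1(I)$ we have $Jf'=f-c$ with $c$ the mean of $f$.
\begin{itemize}
 \item For $q=2$: $Q_m^{[1]}f'=(Q_m^{[2]}(f-c))'=(Q_m^{[2]}f)'$.
 \item For $q=1$: first $Q_m^{[1]}Jf'=Q_m^{[1]}(f-c)=Q_m^{[1]}f-c$. Since $J$ of this is $JJf'$ up to a constant, and $Q_m^{[2]}$ reproduces affine functions, we get $Q_m^{[0]}f'=((Q_m^{[2]}JJf')')'=(Q_m^{[1]}Jf')'=(Q_m^{[1]}f)'$. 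The only bookkeeping needed here is that constants and affine functions are reproduced.
\end{itemize}

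\emph{Bounds \eqref{eq:Q-bound} and \eqref{eq:Q-approx}.} Both reduce to simultaneous $H^r$ stability and approximation of $Q_m^{[2]}$, for $r=0,1,2$, on the uniform mesh with $h=2^{-m}$. Concretely, for $g\in H^{r+1}$ we need
\[
 \norm[L^2]{(Q_m^{[2]}g)^{(r)}}\lesssim\norm[L^2]{g^{(r)}},\qquad
 \norm[L^2]{(g-Q_m^{[2]}g)^{(r)}}\lesssim h\norm[L^2]{g^{(r+1)}}.
\]
We then apply this with $g=f$ for $q=2$, $g=Jf$ for $q=1$, and $g=JJf$ for $q=0$, using $g^{(r)}=f$ in each case.

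\emph{Key step.} We compare $Q_m^{[2]}$ with a quasi-interpolant $R$ and use an inverse estimate:
\[
 \norm{(g-Q_m^{[2]}g)^{(r)}}\le\norm{(g-Rg)^{(r)}}+Ch^{-r}\bigl(\norm{g-Rg}+\norm{g-Q_m^{[2]}g}\bigr)\le\norm{(g-Rg)^{(r)}}+2Ch^{-r}\norm{g-Rg}.
\]
So it suffices to have an $R$ into $F_m^{[2]}$ with $\norm{(g-Rg)^{(j)}}\lesssim h^{s-j}\norm{g^{(s)}}$ for $0\le j\le r\le s\le r+1$. The stability bound follows the same way, with $s=r$.
\begin{itemize}
 \item For $s\ge2$, take $R=\Pi_m^{[2]}$. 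By \cref{prop:interpolation}, $(\Pi_m^{[2]}g)''=P_mg''$. Hence $\norm{(g-\Pi_m^{[2]}g)''}\lesssim h^{s-2}\norm{g^{(s)}}$, and the lower derivatives follow elementwise, because $g-\Pi_m^{[2]}g$ vanishes with its derivative at the nodes.
 \item The main obstacle is the low-regularity cases $s=0$ and $s=1$, where the endpoint data of $\Pi_m^{[2]}$ are not defined. There we use instead a Scott--Zhang/Clément-type local quasi-interpolant into $C^1$ piecewise $\mathbb P_{k+2}$: nodal values and slopes are taken from local $L^2$ projections onto polynomials over neighbouring cells, and the estimates follow from Bramble--Hilbert arguments on patches.
 \item If that construction becomes cumbersome, a fallback is to use $\Pi_m^{[2]}$ applied to a mollification of $g$ at scale $h$.
\end{itemize}
All constants depend only on $k$, by scaling to a reference patch on the uniform mesh.
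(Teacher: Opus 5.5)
Your construction ($Q_m^{[2]}$ the orthogonal projection onto $F_m^{[2]}$, then $Q_m^{[1]}=\frac{\dd}{\dd x}P_m^{[2]}\mathcal J$ and $Q_m^{[0]}=\frac{\dd^2}{\dd x^2}P_m^{[2]}\mathcal J^2$, with your $J$ equal to the paper's $\mathcal J$), the reproduction argument, and the commutation via reproduction of constants and affine functions are the same as the paper's. The difference is in how you obtain the bounds.

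You derive, at each derivative order $r$, both seminorm stability and first-order approximation of $P_m^{[2]}$ from one C\'ea-plus-inverse-estimate comparison with a quasi-interpolant $R$. For $s\ge2$ you take $R=\Pi_m^{[2]}$; your case $r=s=2$ is essentially the paper's Hermite argument for \eqref{eq:C1-stability}. For $s\le1$ you take a local $C^1$ Cl\'ement/Scott--Zhang operator, or $\Pi_m^{[2]}$ after extension and mollification at scale $h$.

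The paper never builds a low-regularity $C^1$ quasi-interpolant. It gets the $H^1$ stability of $P_m^{[2]}$, and the bound $\norm[H^1\to L^2]{\id-P_m^{[2]}}\le Ch$, by real interpolation between the $L^2$ and $H^2$ estimates, using $[L^2(I),H^2(I)]_{1/2}=H^1(I)$. For $q=0,1$ it obtains \eqref{eq:Q-approx} directly from quasi-optimality of an $L^2$-bounded projection, together with approximation by piecewise constants or continuous piecewise linears. That makes your cases $(r,s)=(1,2),(2,3)$ unnecessary.

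Your route is elementary and local: it needs no interpolation-space identification and yields patchwise estimates. The price is the quasi-interpolant, which you only sketch. It does go through. Single-valued nodal values and slopes give a $C^1$ function, and constant reproduction with local $L^2$ stability and scaling gives the $s\le1$ bounds. For the mollification variant, use an even-reflection extension so that only $\norm[L^2]{g'}$ appears.

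One small slip: the stability bound must be applied to $g=\mathcal J^rf\in H^r$, not $H^{r+1}$. Your case $s=r$ already covers this.
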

\begin{proof}
Set $h=2^{-m}$ and let $P_m^{[2]}$ be the $L^2$ orthogonal projection onto
$F_m^{[2]}$. Define the mean-zero primitive~\cite[Section 5.1]{HL} by
\[
 \mathcal J f(x)=\int_0^xf(t)\,\dd t
       -\int_0^1\int_0^yf(t)\,\dd t\,\dd y.
\]
Then $(\mathcal J f)'=f$, $\int_I\mathcal J f=0$, and Poincar\'e's inequality gives
$\norm[H^1]{\mathcal J f}\le C\norm[L^2]f$.
For $f\in H^1(I)$ and $\bar f=\int_I f$,
\[
 \mathcal J f'=f-\bar f,\qquad
 \mathcal J^2f'=\mathcal J f-\bar f\cdot(x-\tfrac12).
\]

\emph{Stability of the orthogonal projection.}
For $u\in H^2(I)$, let $J_mu$ be the piecewise cubic Hermite interpolant,
matching $u,u'$ at every node. Since $k\ge1$, $J_mu\in F_m^{[2]}$.
On each cell $K$, the error $e=u-J_mu$ has zero endpoint values and
derivatives. Integration by parts gives $\int_K e''p=0$ for
$p\in\mathbb{P}_1(K)$, so $(J_mu)''$ is the cellwise $\mathbb{P}_1$ orthogonal projection
of $u''$. Two cellwise Poincar\'e inequalities therefore give
\begin{equation}\label{eq:hermite-approx}
 \norm[L^2]{(J_mu)''}\le\norm[L^2]{u''},\qquad
 \norm[L^2]{u-J_mu}\le\frac{h^2}{\pi^2}\norm[L^2]{u''}.
\end{equation}
The polynomial inverse estimate
$\norm[L^2]{v_m''}\le C_kh^{-2}\norm[L^2]{v_m}$ for $v_m\in F_m^{[2]}$,
together with reproduction and $L^2$ contractivity, yields
\begin{equation}\label{eq:C1-stability}
 \begin{aligned}
 \norm[L^2]{(P_m^{[2]}u)''}
 &\le\norm[L^2]{(J_mu)''}
      +C_kh^{-2}\norm[L^2]{P_m^{[2]}(u-J_mu)}\\
 &\le C_k\norm[L^2]{u''}.
 \end{aligned}
\end{equation}
The standard equivalence
$\norm[H^2]u\eqsim\norm[L^2]u+\norm[L^2]{u''}$ gives uniform $H^2$ stability.
Operator interpolation, using the Sobolev interpolation identity
$[L^2(I),H^2(I)]_{1/2}=H^1(I)$ with equivalent
norms~\cite{CHM2015},
then gives uniform $H^1$ stability of $P_m^{[2]}$.
Here $[X_0,X_1]_\theta$ denotes real $K$-method interpolation with
parameters $(\theta,2)$.

\emph{Construction and commutation.}
Set
\[
 Q_m^{[2]}=P_m^{[2]},\qquad
 Q_m^{[1]}=\frac{\dd}{\dd x}P_m^{[2]}\mathcal J,\qquad
 Q_m^{[0]}=\frac{\dd^2}{\dd x^2}P_m^{[2]}\mathcal J^2.
\]
Since $\mathcal J F_m^{[1]}\subset F_m^{[2]}$ and
$\mathcal J^2F_m^{[0]}\subset F_m^{[2]}$, these maps have the stated ranges
and reproduce them. The $H^1$ stability and \eqref{eq:C1-stability} give
\[
 \norm[L^2]{Q_m^{[1]}f}\le C_k\norm[H^1]{\mathcal J f}\le C_k\norm[L^2]f,
 \qquad
 \norm[L^2]{Q_m^{[0]}f}\le C_k\norm[L^2]{(\mathcal J^2f)''}
                         =C_k\norm[L^2]f.
\]
Together with contractivity for $q=2$, this proves \eqref{eq:Q-bound}.
Because $P_m^{[2]}$ reproduces constants and affine functions, the two
identities for $\mathcal J f'$ and $\mathcal J^2f'$ give \eqref{eq:Q-commute}.

\emph{First-order approximation.}
For $q=0,1$, the cellwise mean approximation and continuous piecewise linear
interpolant of $f$, respectively, belong to $F_m^{[q]}$.
The cellwise mean-zero and endpoint Poincar\'e inequalities give
\[
 \inf_{v_m\in F_m^{[q]}}\norm[L^2]{f-v_m}
 \le\frac{h}{\pi}\norm[L^2]{f'},\qquad q=0,1.
\]
Reproduction and \eqref{eq:Q-bound} imply
$\norm[L^2]{f-Q_m^{[q]}f}
\le(1+C_k)\inf_{v_m\in F_m^{[q]}}\norm[L^2]{f-v_m}$,
which proves \eqref{eq:Q-approx} for these two factors.
For $q=2$, let $T_m=\id-P_m^{[2]}$. Orthogonality and
\eqref{eq:hermite-approx} give
$\norm[L^2\to L^2]{T_m}\le1$ and
$\norm[H^2\to L^2]{T_m}\le h^2/\pi^2$.
Using $[L^2(I),H^2(I)]_{1/2}=H^1(I)$, operator interpolation with the
common target $L^2(I)$ yields
\[
 \norm[H^1\to L^2]{T_m}
 \le C\norm[L^2\to L^2]{T_m}^{1/2}
       \norm[H^2\to L^2]{T_m}^{1/2}
 \le Ch.
\]
Apply this bound to $f-\bar f$. Since $T_m$ annihilates constants,
Poincar\'e's inequality for mean-zero functions gives
$\norm[L^2]{T_mf}\le Ch\norm[L^2]{f'}$, completing \eqref{eq:Q-approx}.
\end{proof}

\subsection{Commuting sparse-grid projection}
Set $Q_{-1}^{[q]}=0$ and $\delta Q_m^{[q]}=Q_m^{[q]}-Q_{m-1}^{[q]}$.
Let $Q_{j,m}^{[q]}$ and $\delta Q_{j,m}^{[q]}$ denote $Q_m^{[q]}$ and
$\delta Q_m^{[q]}$, respectively, acting only in $x_j$. Define
\begin{equation}\label{eq:smooth-sparse}
 \mathcal Q_{\mathcal L}^{\bm q}
 =\sum_{\bm m\in\mathcal L}\prod_{j=1}^d\delta Q_{j,m_j}^{[q_j]}.
\end{equation}
Define the double-form map
$\mathcal Q_{\mathcal L}^{a,b}:L^2(\Omega;\Alt^{a,b}\mathbb{R}^d)\to V_{\mathcal L}^{a,b}$
and its reduced restrictions by
\[
\begin{gathered}
 \mathcal Q_{\mathcal L}^{a,b}u:=
 \sum_{\bm s\in\mathcal B_a,\bm t\in\mathcal B_b}
 \bigl(\mathcal Q_{\mathcal L}^{\bm2-\bm s-\bm t}u_{\bm s,\bm t}\bigr)
 e_{\bm s}\otimes e_{\bm t},\\
 \mathcal Q_{\star,\mathcal L}^i:=
 \mathcal Q_{\mathcal L}^{a_i,b_i}|_{L^2(\Omega;\mathbb X_\star^i)}:
 L^2(\Omega;\mathbb X_\star^i)\to V_{\star,\mathcal L}^i.
\end{gathered}
\]
\begin{lemma}[Commuting sparse-grid projection]\label{lem:sparse-smooth}
The operator \eqref{eq:smooth-sparse} is a projection onto $F_{\mathcal L}^{\bm q}$ satisfying
\begin{equation}\label{eq:sparse-H1}
 \norm[L^2]{\mathcal Q_{\mathcal L}^{\bm q}v}\le C_{k,d}\norm[H^1(\Omega)]v,
\end{equation}
independently of $\mathcal L$. Its reduced projections satisfy
\[
 D_\star^i\mathcal Q_{\star,\mathcal L}^iu
 =\mathcal Q_{\star,\mathcal L}^{i+1}D_\star^iu,
 \qquad u\in H(D_\star^i;\Omega).
\]
\end{lemma}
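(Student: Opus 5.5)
The plan has three parts: the projection property, the uniform $H^1\to L^2$ bound \eqref{eq:sparse-H1}, and the commutation with $D_\star^i$. Each part reduces to one-dimensional facts from \cref{lem:smooth-1d}.

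\emph{Projection property.} First I will show the one-dimensional composition rule $Q_m^{[q]}Q_l^{[q]}=Q_{\min(m,l)}^{[q]}$.
\begin{itemize}
\item For $q=2$ this holds because $P_m^{[2]}$ is the $L^2$ orthogonal projection onto the nested spaces $F_m^{[2]}$.
\item For $q=1$, I use $\mathcal J g'=g-\bar g$ and the fact that $P_l^{[2]}$ reproduces constants. This gives
\[
Q_l^{[1]}Q_m^{[1]}f=\tfrac{\dd}{\dd x}P_l^{[2]}\mathcal J\tfrac{\dd}{\dd x}P_m^{[2]}\mathcal Jf=\tfrac{\dd}{\dd x}P_l^{[2]}P_m^{[2]}\mathcal Jf.
\]
\item For $q=0$ the same argument works with $\mathcal J^2$, since $P_l^{[2]}$ reproduces affine functions.
\end{itemize}
Consequently $\delta Q_m^{[q]}\delta Q_l^{[q]}=\delta_{ml}\,\delta Q_m^{[q]}$. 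Exactly as in \cref{prop:cochain}, this makes $\mathcal Q_{\mathcal L}^{\bm q}$ idempotent.

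Each product $\prod_j\delta Q_{j,m_j}^{[q_j]}$ maps into $\bigotimes_jF_{m_j}^{[q_j]}$. By \eqref{eq:hierarchy} and downward closedness of $\mathcal L$, this space lies in $F_{\mathcal L}^{\bm q}$.

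Conversely, take $v\in\bigotimes_jF_{m_j}^{[q_j]}$ with $\bm m\in\mathcal L$. Reproduction gives $\delta Q_{j,l_j}^{[q_j]}v=0$ whenever $l_j>m_j$. The sum therefore runs only over $\bm l\le\bm m$, all of which lie in $\mathcal L$, and it telescopes to $\prod_jQ_{j,m_j}^{[q_j]}v=v$. Hence the range of $\mathcal Q_{\mathcal L}^{\bm q}$ is exactly $F_{\mathcal L}^{\bm q}$.

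\emph{Uniform $H^1\to L^2$ bound.} This is the main obstacle: the sum over $\mathcal L$ must be controlled by only one derivative, and no mixed regularity is available. I will bound each term separately and use the triangle inequality.

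Fix $\bm l$, and let $j^*$ be a coordinate with $l_{j^*}=\max_jl_j$. If $l_{j^*}\ge1$, write $\delta Q_{l}=(Q_l-\id)-(Q_{l-1}-\id)$. Applying \eqref{eq:Q-approx} fibrewise in $x_{j^*}$ gives
\[
\norm[L^2]{\delta Q_{j^*,l_{j^*}}^{[q_{j^*}]}v}\le 3C_k2^{-l_{j^*}}\norm[L^2]{\partial_{j^*}v}.
\]
In every other direction, $\delta Q_{j,l_j}^{[q_j]}$ is bounded on $L^2$ by $2C_k$, by \eqref{eq:Q-bound}. The tensor product of these fibrewise-bounded operators is bounded on $L^2(\Omega)$ by the product of the constants. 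Hence
\[
\Bigl\|\prod_j\delta Q_{j,l_j}^{[q_j]}v\Bigr\|_{L^2}\le C\,2^{-\max_j l_j}\norm[H^1]{v}.
\]
Summing over all $\bm l\in\mathbb N_0^d$, and not just over $\mathcal L$, gives
\[
\sum_{M\ge0}\#\{\bm l:\max_jl_j=M\}\,2^{-M}\le\sum_M d(M+1)^{d-1}2^{-M}<\infty.
\]
This yields a constant $C_{k,d}$ independent of $\mathcal L$.

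\emph{Commutation.} First take $u$ smooth on $\overline\Omega$. By \eqref{eq:Q-commute} applied in one variable, $\partial_j\,\delta Q_{j,m}^{[q]}=\delta Q_{j,m}^{[q-1]}\partial_j$ for $q\ge1$. Operators acting in different coordinates commute, so $\partial_j\mathcal Q_{\mathcal L}^{\bm q}=\mathcal Q_{\mathcal L}^{\bm q-\bm e_j}\partial_j$ whenever $q_j\ge1$; every nonzero term of $\dd$ has $q_j=2-t_j\ge1$.

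Commutation with $S$, with the BGG algebraic projections, and with $(S^J)^{-1}$ follows blockwise exactly as in \cref{prop:cochain}. The reason is that $\mathcal Q_{\mathcal L}^{a,b}$ applies the same scalar operator $\mathcal Q_{\mathcal L}^{\bm2-\bm c}$ on each coordinate-weight block of \eqref{eq:weight-block}. Composing these identities through \eqref{eq:BGG} gives $D_\star^i\mathcal Q_{\star,\mathcal L}^iu=\mathcal Q_{\star,\mathcal L}^{i+1}D_\star^iu$ for smooth $u$. This includes the second-order zigzag $D^J=d_B^J(S^J)^{-1}d_A^J$.

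For general $u\in H(D_\star^i;\Omega)$, I will argue by density, since $C^\infty(\overline\Omega)$ is dense in $H(D_\star^i)$ on the cube (for instance by dilation about the centre followed by mollification).
\begin{itemize}
\item The left side is continuous in $L^2$: $\mathcal Q_{\star,\mathcal L}^i$ is $L^2$-bounded (with an $\mathcal L$-dependent constant, which is harmless here), and $D_\star^i$ is bounded on the finite-dimensional space $V_{\star,\mathcal L}^i$.
\item The right side is continuous in the $H(D_\star^i)$ norm.
\end{itemize}
The identity therefore passes to the limit.
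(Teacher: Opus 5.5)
Your proposal is correct and follows essentially the same route as the paper: reproduction of $F_{\mathcal L}^{\bm q}$ via downward closure and telescoping, a per-index bound $C2^{-\max_j l_j}\norm[H^1]{v}$ obtained from one derivative in the maximal-level direction and $L^2$ bounds elsewhere, and commutation on smooth fields extended to $H(D_\star^i;\Omega)$ by density. The differences are cosmetic. Your composition rule $Q_m^{[q]}Q_l^{[q]}=Q_{\min(m,l)}^{[q]}$ is correct but redundant once range and reproduction are established. Your shell count by $\max_j l_j$ replaces the paper's bound $2^{-\max_j m_j}\le 2^{-|\bm m|_1/d}$, and your finite-dimensionality continuity argument replaces the paper's appeal to closedness of $D_\star^i$.
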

\begin{proof}
Downward closure and \eqref{eq:hierarchy} give
\[
 F_{\mathcal L}^{\bm q}
 =\sum_{\bm\ell\in\mathcal L}\bigotimes_jF_{\ell_j}^{[q_j]}(x_j).
\]
Each summand in \eqref{eq:smooth-sparse} has range in the corresponding full
tensor-product space and hence in this sum. If $v_j\in F_{\ell_j}^{[q_j]}$, then
$\delta Q_m^{[q_j]}v_j=0$ for $m>\ell_j$, since both projections reproduce
$v_j$. For $v(\bm x)=\prod_jv_j(x_j)$, all surviving indices satisfy $\bm m\le\bm\ell$
and belong to $\mathcal L$. Telescoping gives
$\left(\prod_{j=1}^dQ_{j,\ell_j}^{[q_j]}\right)v=v$.
Linearity proves reproduction of the entire space.

For $m\ge1$, \eqref{eq:Q-bound}--\eqref{eq:Q-approx} give
\[
 \norm[L^2]{\delta Q_m^{[q]}w}\le3C_k2^{-m}\norm[L^2]{w'},
 \qquad \norm[L^2\to L^2]{\delta Q_m^{[q]}}\le2C_k.
\]
The zero-level increments are also $L^2$ bounded. These estimates extend to
Hilbert-valued functions by orthonormal expansion and summation of squares.
For $\bm m\ne\bm0$, choose $j_*$ with $m_{j_*}=M=\max_jm_j$.
The derivative estimate in $x_{j_*}$ and the $L^2$ bounds in the other
coordinates, together with Fubini's theorem, give
\[
 \left\|\left(\prod_{j=1}^d\delta Q_{j,m_j}^{[q_j]}\right)v\right\|_{L^2}
 \le C_{k,d}2^{-M}\norm[L^2]{\partial_{x_{j_*}}v}
 \le C_{k,d}2^{-M}\norm[H^1]v.
\]
The final bound also holds for the zero block, with $M=0$.
Since $\max_jm_j\ge|\bm m|_1/d$,
\[
 \sum_{\bm m\in\mathcal L}2^{-\max_jm_j}
 \le\sum_{\bm m\in\mathbb{N}_0^d}2^{-|\bm m|_1/d}
 =\prod_{j=1}^d\sum_{r=0}^{\infty}2^{-r/d}
 =(1-2^{-1/d})^{-d}.
\]
Summing the block bounds by the triangle inequality proves
\eqref{eq:sparse-H1}, with a constant independent of $\mathcal L$.

By \eqref{eq:Q-commute},
$\partial_jQ_{j,m}^{[q]}=Q_{j,m}^{[q-1]}\partial_j$ for $q=1,2$;
the same scalar operator
on each coordinate-weight block also commutes with the algebraic maps and their inverses
and projections. Hence the reduced commutation identities hold on smooth fields.
For fixed finite $\mathcal L$, the projections are $L^2$ bounded.
By density of smooth $\mathbb X_\star^i$-valued fields in the graph norm
and closedness of $D_\star^i$, these identities extend to
$H(D_\star^i;\Omega)$.
\end{proof}

\subsection{Level-uniform discrete potentials and exactness}
Let $Z_{\star,\mathcal L}^i=\ker(D_\star^i|_{V_{\star,\mathcal L}^i})$ for $i<3$,
and set $Z_{\star,\mathcal L}^3=V_{\star,\mathcal L}^3$.

\begin{theorem}[Level-uniform discrete potentials]\label{thm:all-potentials}
For every $k\ge1$, $\star\in\{\mathfrak{H},\mathfrak{E},\mathfrak{D}\}$, and $i=0,1,2$, there is a linear map
$\widehat R_{\star,\mathcal L}^{i+1}:Z_{\star,\mathcal L}^{i+1}\to V_{\star,\mathcal L}^{i}$
such that
\begin{equation*}
 D_\star^i\widehat R_{\star,\mathcal L}^{i+1}f=f,\qquad
 \norm[L^2]{\widehat R_{\star,\mathcal L}^{i+1}f}
 \le C_{\star,k}\norm[L^2]f.
\end{equation*}
The constants are independent of $\mathcal L$, in particular of the sparse level $n$.
\end{theorem}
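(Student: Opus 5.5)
The plan is to transfer the continuous regular potentials \eqref{eq:regular-potential} to the sparse spaces by means of the commuting projection of \cref{lem:sparse-smooth}. Given $f\in Z_{\star,\mathcal L}^{i+1}$, we first check that it lies in the data space of $\mathcal R_\star^{i+1}$. It is $L^2$ with values in $\mathbb X_\star^{i+1}$. When $i<2$ it also satisfies $D_\star^{i+1}f=0$, and since $V_{\star,\mathcal L}^{i+1}\subset H(D_\star^{i+1};\Omega)$ (\cref{sec:reduced-sparse}), this is the distributional kernel condition. We then set
\[
 \widehat R_{\star,\mathcal L}^{i+1}f:=\mathcal Q_{\star,\mathcal L}^{i}\mathcal R_\star^{i+1}f .
\]
This map is linear, and it takes values in $V_{\star,\mathcal L}^i$ because $\mathcal Q_{\star,\mathcal L}^i$ preserves the algebraic constraint spaces blockwise.

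\emph{Potential identity.} Put $u=\mathcal R_\star^{i+1}f$. Then $u\in H^{\gamma_{\star,i}}\subset H^1$ and $D_\star^iu=f\in L^2$, so $u\in H(D_\star^i;\Omega)$. The commutation in \cref{lem:sparse-smooth} gives
\[
 D_\star^i\mathcal Q_{\star,\mathcal L}^iu=\mathcal Q_{\star,\mathcal L}^{i+1}D_\star^iu=\mathcal Q_{\star,\mathcal L}^{i+1}f=f .
\]
The last equality holds because $\mathcal Q_{\star,\mathcal L}^{i+1}$ is a projection onto $V_{\star,\mathcal L}^{i+1}$: it reproduces each $F_{\mathcal L}^{\bm q}$ coefficientwise, and $f$ has coefficients in these spaces.

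\emph{Bound.} Apply \eqref{eq:sparse-H1} to each scalar coefficient of $u$. The finitely many components give a $k,d$-dependent factor, so
\[
 \norm[L^2]{\mathcal Q_{\star,\mathcal L}^iu}\le C_{k,d}\norm[H^1]{u}\le C_{k,d}\norm[H^{\gamma_{\star,i}}]{u}\le C_{k,d}C_{\star,i}\norm[L^2]f .
\]
Here $\gamma_{\star,i}\ge1$ is used. The constant is independent of $\mathcal L$, since \eqref{eq:sparse-H1} is level-uniform.

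The main point to verify is that the reduced commutation of \cref{lem:sparse-smooth} holds on the full graph domain, including the second-order zigzag differential $D^J=d_B^J(S^J)^{-1}d_A^J$. If $u$ has only $H^1$ regularity, the intermediate quantity $d_A^Ju$ is merely $L^2$. The lemma already asserts the identity on $H(D_\star^i;\Omega)$, so here it suffices to cite it. Should one prefer a direct check, it follows because $u\in H^{\gamma_{\star,i}}$ with $\gamma=2$ exactly at the second-order operator. In that case each factor of the zigzag is applied to $H^1$ or $L^2$ fields, and each commutes with the one-dimensional $Q$'s by \eqref{eq:Q-commute} and with the algebraic maps blockwise.
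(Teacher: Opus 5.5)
Your proposal is correct and matches the paper's proof step for step. You set $\widehat R_{\star,\mathcal L}^{i+1}=\mathcal Q_{\star,\mathcal L}^{i}\mathcal R_\star^{i+1}$, obtain $D_\star^i\widehat R_{\star,\mathcal L}^{i+1}f=f$ from commutation plus reproduction, and derive the level-uniform bound from \eqref{eq:sparse-H1} with $\gamma_{\star,i}\ge1$. Your added checks (that the discrete cocycle lies in the data space of $\mathcal R_\star^{i+1}$, and that commutation holds on the graph domain, including the direct argument using $\gamma_{\star,i}=2$ at the zigzag operator) are accurate and spell out what the paper states tersely.
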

\begin{proof}
By conformity, every discrete cocycle $f$ is a continuous cocycle. Continuous exactness
on the cube and \eqref{eq:regular-potential} give a regular preimage. Define
\[
 \widehat R_{\star,\mathcal L}^{i+1}f
 =\mathcal Q_{\star,\mathcal L}^{i}\mathcal R_\star^{i+1}f.
\]
Commutation and reproduction imply
$D_\star^i\widehat R_{\star,\mathcal L}^{i+1}f
=\mathcal Q_{\star,\mathcal L}^{i+1}f=f$.
Since every $\gamma_{\star,i}\ge1$, \eqref{eq:sparse-H1} and
\eqref{eq:regular-potential} give the stated bound. Both factors are linear.
\end{proof}

\begin{corollary}[Exactness]\label{thm:exact}
The following BGG sequence is exact
\begin{equation*}
 0\longrightarrow \mathcal K_\star\hookrightarrow V_{\star,\mathcal L}^0
 \xrightarrow{D_\star^0}V_{\star,\mathcal L}^1
 \xrightarrow{D_\star^1}V_{\star,\mathcal L}^2
 \xrightarrow{D_\star^2}V_{\star,\mathcal L}^3\longrightarrow0.
\end{equation*}
\end{corollary}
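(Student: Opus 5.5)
Exactness has to be checked at each position of the augmented sequence. At the interior positions $V_{\star,\mathcal L}^1$ and $V_{\star,\mathcal L}^2$, the inclusion $\ran D_\star^{i-1}\subset\ker D_\star^i$ holds because the reduced sparse spaces form a subcomplex (\cref{sec:reduced-sparse}). The reverse inclusion comes from \cref{thm:all-potentials}: for $f\in Z_{\star,\mathcal L}^{i}$, $i=1,2$, the element $u=\widehat R_{\star,\mathcal L}^{i}f\in V_{\star,\mathcal L}^{i-1}$ satisfies $D_\star^{i-1}u=f$. The same theorem with $i=2$ and $Z_{\star,\mathcal L}^3=V_{\star,\mathcal L}^3$ shows that $D_\star^2$ is onto $V_{\star,\mathcal L}^3$, which is exactness at the last nonzero position.

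The remaining work is at the left end. There we must show that $\mathcal K_\star\subset V_{\star,\mathcal L}^0$ and that $\ker(D_\star^0|_{V_{\star,\mathcal L}^0})=\mathcal K_\star$; injectivity of the inclusion is trivial. The inclusion $\ker\subset\mathcal K_\star$ follows from conformity, since $V_{\star,\mathcal L}^0\subset H(D_\star^0;\Omega)$ and the continuous augmented sequence is exact on the cube. The inclusion $\mathcal K_\star\subset\ker$ is also clear once $\mathcal K_\star\subset V_{\star,\mathcal L}^0$ is known.

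So the step needing care is $\mathcal K_\star\subset V_{\star,\mathcal L}^0$. I would prove it by showing that each coefficient of a kernel element lies in $F_{\bm0\text{-block}}:=\bigotimes_j W_0^{[q_j]}$. This space is contained in $F_{\mathcal L}^{\bm q}$ because $\mathcal L$ is nonempty and downward closed, so $\bm0\in\mathcal L$. Since $W_0^{[q]}=\mathbb P_{k+q}$, it suffices that every coefficient is a polynomial of degree at most $k+q_j$ in $x_j$. The required exponents $\bm q=\bm2-\bm s-\bm t$ are fixed by the bidegrees of $V^0_\star$:
\begin{itemize}
\item[] For $\mathfrak H$ (bidegree $(0,0)$), $\bm q=\bm2$, and $\mathbb P_1(\Omega)\subset\bigotimes_j\mathbb P_{k+2}$.
\item[] For $\mathfrak E$ (bidegree $(0,1)$), the $j$th component has $q_j=1$ and $q_\ell=2$ for $\ell\neq j$. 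The field $a+b\times\bm x$ has $j$th component affine and independent of $x_j$, so it lies in $\mathbb P_{k+1}(x_j)\otimes\bigotimes_{\ell\ne j}\mathbb P_{k+2}(x_\ell)$.
\item[] For $\mathfrak D$ (bidegree $(0,2)$), the component paired with $e_{\bm t}$, $|\bm t|_1=2$, has $q=1$ in the two directions of $\bm t$ and $q=2$ in the remaining one. The component $a_j+cx_j$ is affine, and $k\ge1$ gives $\mathbb P_1\subset\mathbb P_{k+1}$.
\end{itemize}
In the $\mathfrak D$ case I would also check, via the proxy identification of $V^{0,2}$ with vectors, which coordinate direction carries $q=2$. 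Because every component is at most affine and $k\ge1$, this is harmless whatever the indexing turns out to be. No algebraic constraint enters at position $0$, so this completes the proof of exactness.
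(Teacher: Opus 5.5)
Your proposal is correct and follows essentially the same route as the paper's proof. Surjectivity onto each discrete cocycle space comes from the potentials of \cref{thm:all-potentials}, and the reverse inclusions come from the subcomplex property. At the left end, you use conformity plus continuous exactness for $\ker D_\star^0\subset\mathcal K_\star$, and $\bm0\in\mathcal L$ with the coarse blocks $\bigotimes_j\mathbb P_{k+q_j}$ for $\mathcal K_\star\subset V_{\star,\mathcal L}^0$. Your componentwise degree check spells out the paper's one-line claim that the complete coarse polynomial spaces contain $\mathcal K_\star$.
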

\begin{proof}
The potentials in \cref{thm:all-potentials} give surjectivity onto each
discrete cocycle space. The reverse inclusions follow from the complex identities.
At the initial position, the discrete kernel is contained in the continuous
kernel $\mathcal K_\star$. Conversely $\bm0\in\mathcal L$ and the complete coarse polynomial
spaces contain all of $\mathcal K_\star$, giving equality.
\end{proof}

Taking $i=2$ in \cref{thm:all-potentials} gives
\begin{equation}\label{eq:inf-sup}
 \inf_{0\ne f_h\in V_{\star,\mathcal L}^3}
 \sup_{0\ne v_h\in V_{\star,\mathcal L}^2}
 \frac{(D_\star^2v_h,f_h)}
      {\norm[H(D_\star^2)]{v_h}\,\norm[L^2]{f_h}}
 \ge (1+C_{\star,k}^2)^{-1/2}
\end{equation}
by choosing $v_h=\widehat R_{\star,\mathcal L}^3f_h$ in \cref{thm:all-potentials}.

In the following examples, we identify the reduced spaces $V_{\star,\mathcal L}^i$
with their scalar, vector, or tensor proxies introduced in \cref{sec:continuous}.
We use the stable discrete potentials to verify the discrete
Babu\v{s}ka--Brezzi inf--sup condition and coercivity on the coupling kernel
for three mixed sparse-grid methods in $\mathbb{R}^3$.
Together with boundedness of the bilinear forms, these conditions establish
uniform stability and quasi-optimal error estimates.
In the error estimates, variables without the subscript $h$ denote the
corresponding continuous mixed solution.

\begin{example}[Mixed linear elasticity]\label{ex:elasticity}
Let $A$ be a bounded, symmetric, uniformly positive-definite compliance
tensor. For a body force $f\in L^2(\Omega;\mathbb{R}^3)$, consider the linear elasticity equations
$A\sigma=\defop u$, $-\Div\sigma=f$ in $\Omega$, with the homogeneous
displacement condition $u=0$ on $\partial\Omega$. The mixed method seeks
$(\sigma_h,u_h)\in V_{\mathfrak E,\mathcal L}^2\times V_{\mathfrak E,\mathcal L}^3$ such that
\begin{align*}
 (A\sigma_h,\tau_h)+(u_h,\Div\tau_h)&=0,
 &&\tau_h\in V_{\mathfrak E,\mathcal L}^2\subset H(\Div;\mathbb{S}),\\
 (\Div\sigma_h,v_h)&=-(f,v_h),&&v_h\in V_{\mathfrak E,\mathcal L}^3\subset L^2(\Omega;\mathbb{R}^3).
\end{align*}
The displacement condition is imposed weakly through the first equation.
The coupling satisfies \eqref{eq:inf-sup} with $\star=\mathfrak E$.
Coercivity on the divergence-free kernel follows from the uniform positive
definiteness of $A$. Hence the mixed sparse-grid method satisfies
\begin{align*}
 &\norm[H(\Div)]{\sigma-\sigma_h}+\norm[L^2]{u-u_h}\\
 &\quad\le C_{A,k}\left(
 \inf_{\tau_h\in V_{\mathfrak E,\mathcal L}^2}\norm[H(\Div)]{\sigma-\tau_h}
 +\inf_{v_h\in V_{\mathfrak E,\mathcal L}^3}\norm[L^2]{u-v_h}\right).
\end{align*}
\end{example}

\begin{example}[Mixed biharmonic equation]\label{ex:biharmonic}
Consider $\Delta^2u=f$ in $\Omega$ with the clamped boundary conditions
$u=\partial_{\boldsymbol n}u=0$ on $\partial\Omega$, where $\boldsymbol n$
is the outward unit normal. Introduce the moment tensor $\sigma=-\hess u$.
For $f\in L^2(\Omega)$, seek
$(\sigma_h,u_h)\in V_{\mathfrak D,\mathcal L}^2\times V_{\mathfrak D,\mathcal L}^3$ satisfying
\begin{equation}\label{eq:biharmonic-weak}
 \begin{aligned}
 (\sigma_h,\tau_h)+(u_h,\Div\Div\tau_h)&=0,
 &&\tau_h\in V_{\mathfrak D,\mathcal L}^2\subset H(\Div\Div;\mathbb{S}),\\
 (\Div\Div\sigma_h,v_h)&=-(f,v_h),
 &&v_h\in V_{\mathfrak D,\mathcal L}^3\subset L^2(\Omega).
 \end{aligned}
\end{equation}
The clamped conditions are imposed weakly through the first equation.
The coupling satisfies \eqref{eq:inf-sup} with $\star=\mathfrak D$.
The tensor mass bilinear form is coercive on the double-divergence-free
kernel. Consequently, the unique discrete solution satisfies
\begin{align*}
 &\norm[H(\Div\Div)]{\sigma-\sigma_h}+\norm[L^2]{u-u_h}\\
 &\quad\le C_k\left(
 \inf_{\tau_h\in V_{\mathfrak D,\mathcal L}^2}\norm[H(\Div\Div)]{\sigma-\tau_h}
 +\inf_{v_h\in V_{\mathfrak D,\mathcal L}^3}\norm[L^2]{u-v_h}\right).
\end{align*}
\end{example}

\begin{example}[incompatibility problem]\label{ex:inc}
The last example concerns $\inc(\inc U)=F$ in $\Omega$, with the homogeneous boundary conditions
\[
 U\cdot\bm n=\bm0\quad\text{on }\partial\Omega,\qquad
 \inc U\in H_0(\inc;\mathbb{S}).
\]
Here $H_0(\inc;\mathbb{S})$ denotes the closure of
$C_c^\infty(\Omega;\mathbb{S})$ in the $H(\inc)$ graph norm.
To remove nonuniqueness
caused by $\ker(\inc)=\defop H^1(\Omega;\mathbb{R}^3)$, we impose
$U\perp_{L^2}\ker(\inc)$ and introduce a Lagrange multiplier $p$
to enforce this constraint. Since only $\defop p$ enters the formulation,
we fix $p$ by requiring $(p,r)=0$ for all $r\in\mathcal K_{\mathfrak E}$.
Set $M_h=V_{\mathfrak E,\mathcal L}^0\cap\mathcal K_{\mathfrak E}^{\perp_{L^2}}$.
For $F\in L^2(\Omega;\mathbb{S})$, a mixed formulation for $\inc(\inc U)=F$ seeks $(U_h,p_h)\in V_{\mathfrak E,\mathcal L}^1\times M_h$ such that
\begin{equation}\label{eq:inc-weak}
 \begin{aligned}
 (\inc U_h,\inc V_h)+(\defop p_h,V_h)&=(F,V_h),&&V_h\in V_{\mathfrak E,\mathcal L}^1,\\
 (U_h,\defop q_h)&=0,&&q_h\in M_h.
 \end{aligned}
\end{equation}
\Cref{thm:all-potentials} implies that 
\begin{equation*}
 \inf_{0\ne q_h\in M_h}\sup_{0\ne V_h\in V_{\mathfrak E,\mathcal L}^1}
 \frac{(V_h,\defop q_h)}
      {\norm[H(\inc)]{V_h}\,\norm[H(\defop)]{q_h}}
 \ge c_k>0.
\end{equation*}
By \cref{thm:exact}, $Z_{\mathfrak E,\mathcal L}^1=\defop M_h$. For $U_h\in V_{\mathfrak E,\mathcal L}^1\cap(Z_{\mathfrak E,\mathcal L}^1)^{\perp_{L^2}}$,
the potential bound in \cref{thm:all-potentials} and orthogonality to the kernel
give the discrete Poincar\'e inequality
\begin{equation*}
 \norm[H(\inc)]{U_h}^2
 \le (1+C_{\mathfrak E,k}^2)\norm[L^2]{\inc U_h}^2.
\end{equation*}
Thus the energy bilinear form is coercive on this kernel, and the unique
discrete solution $(U_h,p_h)$ is quasi-optimal:
\begin{align*}
 &\norm[H(\inc)]{U-U_h}+\norm[H(\defop)]{p-p_h}\\
 &\quad\le C_k\left(
 \inf_{V_h\in V_{\mathfrak E,\mathcal L}^1}\norm[H(\inc)]{U-V_h}
 +\inf_{q_h\in M_h}\norm[H(\defop)]{p-q_h}\right).
\end{align*}
\end{example}

\section{Approximation properties}\label{sec:approximation}

For sparse-grid level $n$, we write $h_n=2^{-n}$.
We first treat scalar tensor-product spaces with integration orders
$\bm q\in\{0,1,2\}^d$ and then the reduced graph norms. The estimates follow
from one-dimensional detail decay, anisotropic tail summation, and
orthogonality in the discontinuous directions.

\begin{lemma}[One-dimensional decay]\label{lem:decay}
If $q\in\{0,1,2\}$ and $\rho$ is an integer satisfying $q\le\rho\le k+1+q$, then
\begin{equation}\label{eq:decay}
 \norm[L^2]{\Delta_m^{[q]}u}\le C_k2^{-\rho m}\norm[H^\rho]u
 \qquad(u\in H^\rho(I),\ m\ge0).
\end{equation}
\end{lemma}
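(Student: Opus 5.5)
We prove \eqref{eq:decay} in two steps: first the case $q=0$ for the $L^2$ projections, then the cases $q=1,2$ by reduction to $q=0$ through the commutation property \eqref{eq:Delta}.

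\emph{Case $q=0$.} Here $\Pi_m^{[0]}=P_m$. For $m=0$ the bound is simply $L^2$ stability, $\norm[L^2]{P_0u}\le\norm[L^2]u\le\norm[H^\rho]u$. For $m\ge1$ we write
\[
\Delta_m^{[0]}u=(u-P_{m-1}u)-(u-P_mu).
\]
The cellwise Bramble--Hilbert estimate for piecewise $\mathbb{P}_k$ projection on a mesh of size $h$ gives $\norm[L^2]{u-P_lu}\le C_k2^{-\rho l}\norm[H^\rho]u$ for $0\le\rho\le k+1$. Applying this with $l=m-1$ and $l=m$ yields the claim, with the harmless factor $2^\rho$ absorbed into $C_k$.

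\emph{Cases $q=1,2$.} We avoid redoing Bramble--Hilbert estimates for the interpolants with endpoint dofs. Instead we use that $\Delta_m^{[q]}u\in W_m^{[q]}$ and, by \eqref{eq:Delta},
\[
(\Delta_m^{[q]}u)^{(q)}=\Delta_m^{[0]}u^{(q)}.
\]
For $m\ge1$, \cref{lem:hierarchy} shows that each element of $W_m^{[q]}$ is a sum of functions $J_K^{[q]}z_K$ supported on parents $K$ with $|K|=2^{1-m}$, vanishing together with derivatives of order $<q$ at the endpoints of $K$. Hence, by $q$ applications of the one-dimensional Poincar\'e (Friedrichs) inequality on each parent cell $K$,
\[
\norm[L^2(K)]{J_K^{[q]}z}\le |K|^q\norm[L^2(K)]{z}.
\]
Summing over parents and using $J^{[q]}z$ with $z=\Delta_m^{[0]}u^{(q)}$ gives
\[
\norm[L^2]{\Delta_m^{[q]}u}\le 2^{q(1-m)}\norm[L^2]{\Delta_m^{[0]}u^{(q)}}.
\]
Since $u^{(q)}\in H^{\rho-q}$ and $0\le\rho-q\le k+1$, the case $q=0$ applied to $u^{(q)}$ gives the bound $C_k2^{-qm}2^{-(\rho-q)m}\norm[H^{\rho-q}]{u^{(q)}}\le C_k2^{-\rho m}\norm[H^\rho]u$.

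For $m=0$, $\Delta_0^{[q]}=\Pi_0^{[q]}$ is a fixed projection defined by finitely many dofs (point values and derivatives up to order $q-1$, plus moments), which are all bounded on $H^q(I)$ by the Sobolev embedding. Hence $\norm[L^2]{\Pi_0^{[q]}u}\le C_k\norm[H^q]u\le C_k\norm[H^\rho]u$ for $\rho\ge q$.

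The main point to check is the identification of $\Delta_m^{[q]}u$ as $J^{[q]}\Delta_m^{[0]}u^{(q)}$ parentwise. The function $\Delta_m^{[q]}u$ lies in $W_m^{[q]}$, and $\tfrac{\dd^q}{\dd x^q}$ is injective on $W_m^{[q]}$ for $m\ge1$ with inverse $J^{[q]}$ (\cref{lem:hierarchy} and the remark after it). So $\Delta_m^{[q]}u$ is determined by its $q$th derivative, which equals $\Delta_m^{[0]}u^{(q)}$ by \eqref{eq:Delta}, and the Friedrichs inequality applies parentwise as claimed.
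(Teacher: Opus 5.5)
Your proposal is correct and follows essentially the same route as the paper. Both arguments use the commutation $(\Delta_m^{[q]}u)^{(q)}=(P_m-P_{m-1})u^{(q)}$, the vanishing endpoint data from \cref{lem:hierarchy} to apply iterated Poincar\'e inequalities on each parent cell, an order-$(\rho-q)$ Bramble--Hilbert estimate, and boundedness of the fixed interpolant at $m=0$. The only cosmetic difference is how you bound $P_m-P_{m-1}$: you use the triangle inequality on two projection errors, whereas the paper notes that it is a parentwise orthogonal projector annihilating $\mathbb{P}_k(K)$.
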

\begin{proof}
For $m\ge1$, let $K\in\mathcal T_{m-1}$, so $|K|=2^{-(m-1)}$, and set
$w=\Delta_m^{[q]}u$. By \eqref{eq:Delta},
$w^{(q)}=(P_m-P_{m-1})u^{(q)}$.
For $q=1,2$, the endpoint derivatives of $w$ of orders below $q$ vanish
by \cref{lem:hierarchy}. On $K$, $P_m-P_{m-1}$ is an orthogonal projector
annihilating $\mathbb{P}_k(K)$. Iterated Poincar\'e inequalities and
the order-$(\rho-q)$ Bramble--Hilbert estimate therefore give
\[
\begin{aligned}
 \norm[L^2(K)]w
 &\le C|K|^q\norm[L^2(K)]{w^{(q)}}\\
 &\le C|K|^q\inf_{p\in\mathbb{P}_k(K)}\norm[L^2(K)]{u^{(q)}-p}\\
 &\le C_k|K|^\rho\norm[L^2(K)]{u^{(\rho)}}.
\end{aligned}
\]
For $q=0$ the first inequality is an equality; for $\rho=q$ choose $p=0$.
Summing squares over parents proves \eqref{eq:decay} for $m\ge1$.
At $m=0$, use boundedness of the fixed interpolant on $H^q(I)$.
\end{proof}

Let $q_j\in\{0,1,2\}$ and let $\rho_j$ be positive integers satisfying
$q_j\le\rho_j\le k+1+q_j$.
Since $H_{\rm mix}^{\bm\rho}=\bigotimes_jH^{\rho_j}(I)$,
tensorizing \eqref{eq:decay} gives
\begin{equation}\label{eq:tensor-decay}
 \norm[L^2]{\Delta_{\bm m}^{\bm q}u}
 \le C_{k,d}2^{-\bm\rho\cdot\bm m}\norm[H_{\rm mix}^{\bm\rho}]u.
\end{equation}
For $q=1,2$,
$\norm[H^q]{f-\Pi_m^{[q]}f}\le C_q\norm[L^2]{(\id-P_m)f^{(q)}}\to0$
by the endpoint conditions and \eqref{eq:Pi-commute}, with uniform $H^q$
boundedness; $q=0$ uses orthogonal projection.
Telescoping in each coordinate therefore yields
\[
 \sum_{\bm0\le\bm m\le M\bm1}\Delta_{\bm m}^{\bm q}u
 =\left(\prod_{j=1}^d\Pi_{j,M}^{[q_j]}\right)u
 \longrightarrow u\quad\text{in }L^2(\Omega).
\]
Absolute summability in \eqref{eq:tensor-decay} allows regrouping by total
level, giving the sparse tail
\begin{equation}\label{eq:tail}
 u-I_n^{\bm q}u=\sum_{|\bm m|_1>\nu_n}\Delta_{\bm m}^{\bm q}u.
\end{equation}

For $A\subset\{1,\ldots,d\}$, write $A^c=\{1,\ldots,d\}\setminus A$,
$\bm m_A=(m_j)_{j\in A}$, $|\bm m_A|_1=\sum_{j\in A}m_j$,
$\mathbb{N}_0^A=\prod_{j\in A}\mathbb{N}_0$, and $I^A=\prod_{j\in A}I$.
For $\bm\ell\in\mathbb{N}_0^A$, the full index $(\bm\ell,\bm m_{A^c})$ has entry
$\ell_j$ on $A$ and $m_j$ on $A^c$; $L^2(I^A)$ uses variables $(x_j)_{j\in A}$.
Empty coordinate products contain one tuple of level zero;
$L^2(I^\emptyset)=\mathbb{R}$, and empty space tensor products equal $\mathbb{R}$.
An empty operator product is the identity on the space on which it acts.

\begin{lemma}[Anisotropic tail]\label{lem:tail}
If $\rho_j>0$, $\sigma=\min_j\rho_j$, and
$\mu=\#\{j:\rho_j=\sigma\}$, then for any integer total-level cutoff $M\ge0$,
\begin{equation*}
 \sum_{|\bm m|_1>M}2^{-\bm\rho\cdot\bm m}
 \le C_{\bm\rho,d}2^{-\sigma M}(M+1)^{\mu-1}.
\end{equation*}
\end{lemma}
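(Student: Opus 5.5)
We reduce to an isotropic count on the coordinates of minimal smoothness and a convergent geometric sum on the rest. Let $A=\{j:\rho_j=\sigma\}$, so $\#A=\mu$, and for $j\in A^c$ set $\epsilon_j=\rho_j-\sigma>0$. Then
\[
 \bm\rho\cdot\bm m=\sigma|\bm m|_1+\sum_{j\in A^c}\epsilon_jm_j ,
\]
and we split each index as $\bm m=(\bm m_A,\bm m_{A^c})$. Fix $\bm m_{A^c}$ and set $r=|\bm m_{A^c}|_1$. The inner sum runs over $\bm m_A\in\mathbb{N}_0^A$ with $|\bm m_A|_1>M-r$ and carries the weight $2^{-\sigma|\bm m_A|_1}$. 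We group by $t=|\bm m_A|_1$. The number of such $\bm m_A$ is $\binom{t+\mu-1}{\mu-1}\le C_\mu(t+1)^{\mu-1}$.

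For the inner tail, write $t=T+s$ with $T=\max(M-r+1,0)$ and $s\ge0$. Using $(T+s+1)^{\mu-1}\le(T+1)^{\mu-1}(s+1)^{\mu-1}$, we get
\[
 \sum_{t\ge T}(t+1)^{\mu-1}2^{-\sigma t}\le C_{\sigma,\mu}(T+1)^{\mu-1}2^{-\sigma T}
 \le C_{\sigma,\mu}(M+1)^{\mu-1}2^{-\sigma(M-r)}.
\]
The last inequality uses $T\le M+1$. It also uses $2^{-\sigma T}\le 2^{-\sigma(M-r)}$, which covers the case $T=0$ where $M-r<0$. When $\mu=0$ the set $A$ would be empty, but this cannot happen since $\sigma$ is attained, so $\mu\ge1$.

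Multiplying by the outer weight $2^{-\sigma r-\sum_{A^c}\epsilon_jm_j}$ cancels the factor $2^{\sigma r}$. The remaining outer sum is
\[
 \sum_{\bm m_{A^c}}2^{-\sum_{j\in A^c}\epsilon_jm_j}=\prod_{j\in A^c}(1-2^{-\epsilon_j})^{-1}<\infty .
\]
Combining the two bounds gives $C_{\bm\rho,d}2^{-\sigma M}(M+1)^{\mu-1}$. Nonnegativity of all terms justifies the regrouping by Tonelli's theorem.

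The only delicate step is the inner tail when $r>M$, where the constraint on $\bm m_A$ disappears. There we must make sure we do not bound by $(M-r+1)^{\mu-1}$, which could be nonpositive. Writing everything in terms of $T=\max(M-r+1,0)$ handles this uniformly. The polylogarithmic factor comes solely from the $\mu$ minimal directions, and the strictly larger $\epsilon_j$ absorb the other coordinates into the constant.
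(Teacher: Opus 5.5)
Your proof is correct and is essentially the paper's argument. Both split $\bm\rho\cdot\bm m=\sigma|\bm m|_1+\sum_{j\in A^c}(\rho_j-\sigma)m_j$, count the indices in the $\mu$ minimal directions by $\binom{t+\mu-1}{\mu-1}\le C_\mu(t+1)^{\mu-1}$, sum the remaining directions geometrically, and finish with a weighted geometric tail bound. The only difference is the order of summation. The paper first bounds each shell $|\bm m|_1=s$ by $C2^{-\sigma s}(s+1)^{\mu-1}$ and then sums over $s>M$, citing the tail estimate. You fix $\bm m_{A^c}$ first and use the shifted cutoff $M-r$, which is why you need the separate treatment of $r>M$ (you handle it correctly). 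You also prove the tail bound directly instead of citing it.
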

\begin{proof}
Let $B=\{j:\rho_j>\sigma\}$ be the set of directions with larger decay exponents.
For $|\bm m|_1=s$, fix $\bm m_B\in\mathbb{N}_0^B$ with $|\bm m_B|_1\le s$.
The remaining $\mu$ levels sum to $s-|\bm m_B|_1$, giving
$\binom{s-|\bm m_B|_1+\mu-1}{\mu-1}\le C_\mu(s+1)^{\mu-1}$ choices.
Hence summing over $\bm m_B$ gives
\[
 \sum_{|\bm m|_1=s}2^{-\bm\rho\cdot\bm m}
 \le C_\mu2^{-\sigma s}(s+1)^{\mu-1}
       \prod_{j\in B}(1-2^{-(\rho_j-\sigma)})^{-1}.
\]
Summing these shells over $s>M$ by the weighted geometric tail estimate
of~\cite{HL} proves the claim.
\end{proof}

Applying the triangle inequality to \eqref{eq:tail} and using
\eqref{eq:tensor-decay} and \cref{lem:tail} with
$M=\nu_n=n+d-1$, we obtain
\begin{equation}\label{eq:basic-error}
 \norm[L^2]{u-I_n^{\bm q}u}
 \le C h_n^\sigma(n+1)^{\mu-1}\norm[H_{\rm mix}^{\bm\rho}]u.
\end{equation}
We next sharpen the logarithmic factor by using orthogonality only in the
discontinuous directions.

\begin{theorem}[Scalar component estimates]\label{thm:component}
Set $\rho_j=k+1+q_j$. If
$u\in H_{\rm mix}^{\bm\rho}$, then we have 
\begin{equation}\label{eq:component-error}
 \norm[L^2]{u-I_n^{\bm q}u}
 \le C\norm[H_{\rm mix}^{\bm\rho}]u
 \begin{cases}
 h_n^{k+3}(n+1)^{d-1},&q_j=2\text{ for all }j,\\
 h_n^{k+2}(n+1)^{w-1},&\begin{gathered}[t]q_j\ge1\text{ for all }j,\\
 w=\#\{j:q_j=1\}>0,\end{gathered}\\
 h_n^{k+1}(n+1)^{(v-1)/2},&v=\#\{j:q_j=0\}>0.
 \end{cases}
\end{equation}
\end{theorem}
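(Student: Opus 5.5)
The plan starts from the tail representation \eqref{eq:tail}, $u-I_n^{\bm q}u=\sum_{|\bm m|_1>\nu_n}\Delta_{\bm m}^{\bm q}u$. The first two cases follow directly from \eqref{eq:basic-error}. If all $q_j=2$, then $\rho_j=k+3$ for every $j$, so $\sigma=k+3$ and $\mu=d$. This gives $h_n^{k+3}(n+1)^{d-1}$, since $2^{-\sigma\nu_n}=2^{-\sigma(d-1)}h_n^{\sigma}$ and $(\nu_n+1)\eqsim(n+1)$. If all $q_j\ge1$ and $w=\#\{j:q_j=1\}>0$, then $\sigma=k+2$ and $\mu=w$, which is the second case. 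So only the case $v=\#\{j:q_j=0\}>0$ needs new work. There \eqref{eq:basic-error} gives $h_n^{k+1}(n+1)^{v-1}$, and the task is to improve the exponent $v-1$ to $(v-1)/2$.

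Let $A=\{j:q_j=0\}$, so $|A|=v$. In these directions $\Delta_{j,m}^{[0]}=P_m-P_{m-1}$ are mutually orthogonal $L^2$ projections. Group the tail by the non-discontinuous levels $\bm m_{A^c}$:
\[
 u-I_n^{\bm q}u=\sum_{\bm m_{A^c}}\Bigl(\prod_{j\in A^c}\Delta_{j,m_j}^{[q_j]}\Bigr)g_{\bm m_{A^c}},\qquad
 g_{\bm m_{A^c}}=\sum_{|\bm m_A|_1>\nu_n-|\bm m_{A^c}|_1}\prod_{j\in A}\Delta_{j,m_j}^{[0]}u .
\]
The outer sum over $\bm m_{A^c}$ is handled by the triangle inequality. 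For the inner sum, Pythagoras in the $A$ variables (pointwise in $x_{A^c}$, then Fubini) gives
\[
 \Bigl\|\Bigl(\prod_{A^c}\Delta\Bigr)g\Bigr\|^2=\sum_{\bm m_A}\Bigl\|\prod_{j}\Delta_{j,m_j}^{[q_j]}u\Bigr\|^2 .
\]
To bound each term I will not tensorize the crude bound \eqref{eq:tensor-decay}. Instead I will apply the local estimate of \cref{lem:decay} in each $A$ direction and then sum over cells. This yields
\[
 \|\Delta_{\bm m}^{\bm q}u\|_{L^2}^2\le C\,2^{-2\bm\rho\cdot\bm m}\,\|\Delta_{\bm m_{A^c}}\partial^{\bm\rho_A}_{A}u\|^2,
\]
and, more usefully, a bound in which the $A$ directions contribute $2^{-2(k+1)|\bm m_A|_1}$ times a square-summable quantity.

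Write $\ell=\nu_n-|\bm m_{A^c}|_1$. Counting the shell $|\bm m_A|_1=s$ gives $\binom{s+v-1}{v-1}\lesssim (s+1)^{v-1}$ indices, so
\[
 \sum_{|\bm m_A|_1>\ell}2^{-2(k+1)|\bm m_A|_1}\lesssim 2^{-2(k+1)\ell}(\ell+1)^{v-1}.
\]
Taking the square root produces $2^{-(k+1)\ell}(\ell+1)^{(v-1)/2}$. The $A^c$ directions contribute $2^{-\bm\rho_{A^c}\cdot\bm m_{A^c}}\|u\|_{H^{\bm\rho}_{\rm mix}}$ with $\rho_j\ge k+2$. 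Summing over $\bm m_{A^c}$ then gives
\[
 \sum_{\bm m_{A^c}}2^{-(k+1)(\nu_n-|\bm m_{A^c}|_1)}\,2^{-\sum_{A^c}(k+1+q_j)m_j}\,(n+1)^{(v-1)/2}
 \lesssim h_n^{k+1}(n+1)^{(v-1)/2},
\]
because each $A^c$ direction leaves a convergent geometric factor $2^{-q_jm_j}$ with $q_j\ge1$. If some $|\bm m_{A^c}|_1>\nu_n$, then $\ell<0$; those terms involve the full $A$ sum, which is bounded by $\|u\|$ via Pythagoras, and the $A^c$ decay alone gives a factor $2^{-(k+2)|\bm m_{A^c}|_1}\le h_n^{k+1}$.

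The main obstacle is the per-block estimate in the orthogonal directions. The uniform mixed norm must be split across the orthogonal blocks so that the squared sum over $\bm m_A$ is controlled by $\|u\|_{H^{\bm\rho}_{\rm mix}}^2$ without extra counting. I would first use the simpler bound $\|\prod_A\Delta_{j,m_j}^{[0]}w\|\le C2^{-(k+1)|\bm m_A|_1}\|\partial_A^{k+1}w\|$, applied to $w=\prod_{A^c}\Delta_{j,m_j}^{[q_j]}u$, and accept the counting factor computed above, which already yields the $(n+1)^{(v-1)/2}$ rate. The remaining point is to check that commuting $\partial_A^{k+1}$ past the $A^c$ details and applying \cref{lem:decay} in $A^c$ is legitimate on $H^{\bm\rho}_{\rm mix}$. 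This holds because the operators act in different coordinates, and the one-dimensional estimates extend to Hilbert-valued functions.
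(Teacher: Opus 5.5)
Your proposal is correct. The first two cases coincide with the paper's. In the third case you swap the roles of orthogonality and the triangle inequality.

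\textbf{The paper's route.} It groups the tail by the discontinuous levels $\bm\ell=\bm m_A$. Orthogonality between groups gives the exact identity $\norm[L^2]{u-I_n^{\bm q}u}^2=\sum_{\bm\ell}\norm[L^2]{F_{\bm\ell}}^2$. Each $F_{\bm\ell}$ is then bounded by the triangle inequality over $\bm m_{A^c}$, which needs \cref{lem:tail} for $\tau$. Finally a convolution-type sum in $s=|\bm\ell|_1$ is evaluated.

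\textbf{Your route.} You apply the triangle inequality over $\bm m_{A^c}$ outside and Pythagoras over $\bm m_A$ inside. The shell count then yields $2^{-(k+1)\ell}(\ell+1)^{(v-1)/2}$ directly after the square root. The outer sum reduces to $2^{-(k+1)\nu_n}(\nu_n+1)^{(v-1)/2}\prod_{j\in A^c}\sum_{m\ge0}2^{-q_jm}$, which is a geometric series because $q_j\ge1$ off $A$. This avoids \cref{lem:tail} and the convolution estimate. The paper's organization ($\ell^2$ over $\bm m_A$ of $\ell^1$ over $\bm m_{A^c}$) is formally sharper than yours by Minkowski's inequality, but both give the same rate.

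\textbf{Three small corrections.}
\begin{itemize}
\item Your displayed per-block bound should carry $2^{-2\bm\rho_A\cdot\bm m_A}$, not $2^{-2\bm\rho\cdot\bm m}$. The $A^c$ decay is still hidden inside $\Delta_{\bm m_{A^c}}$, so as written it is counted twice.
\item The blocks with $|\bm m_{A^c}|_1>\nu_n$ need a tail sum, not just a per-block bound: $\sum_{|\bm m_{A^c}|_1>\nu_n}2^{-(k+2)|\bm m_{A^c}|_1}\lesssim 2^{-(k+2)\nu_n}(\nu_n+1)^{d-v-1}\lesssim h_n^{k+1}$. This is immediate once written down.
\item The refinement ``without extra counting'' that you call the main obstacle is unnecessary, and it is in fact unavailable. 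For $u=\prod_{j\in A}x_j^{k+1}$, scaling gives $\norm[L^2]{\prod_{j\in A}\Delta_{j,m_j}^{[0]}u}=c^{v}2^{-(k+1)|\bm m_A|_1}$ with a fixed $c>0$ for all $\bm m_A\ge\bm1$. Hence your inner shell sum is of exact order $(\ell+1)^{v-1}2^{-2(k+1)\ell}$. The counting factor is genuinely the source of $(n+1)^{(v-1)/2}$, as in the paper, so you were right to accept it.
\end{itemize}
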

\begin{proof}
The first two cases follow from \eqref{eq:basic-error}. For the third, use the partition
$A=\{j:q_j=0\}$, $|A|=v$, and $B=A^c=\{j:q_j>0\}$.
For each $\bm\ell\in\mathbb{N}_0^A$, group the tail by summing over $\bm m_B\in\mathbb{N}_0^B$:
\[
 F_{\bm\ell}:=\sum_{|\bm m_B|_1>\nu_n-|\bm\ell|_1}
               \Delta_{(\bm\ell,\bm m_B)}^{\bm q}u.
\]
Here $\Delta_{(\bm\ell,\bm m_B)}^{\bm q}$ uses levels $\ell_j$ on $A$ and $m_j$ on $B$.
Each $F_{\bm\ell}$ is a function on $\Omega$ belonging to
$(\bigotimes_{j\in A}W_{\ell_j}^{[0]})\otimes L^2(I^B)$.
Distinct groups are therefore orthogonal in a discontinuous coordinate,
and absolute convergence gives
\begin{equation}\label{eq:discontinuous-groups}
 \norm[L^2]{u-I_n^{\bm q}u}^2=\sum_{\bm\ell}\norm[L^2]{F_{\bm\ell}}^2.
\end{equation}
For $\tau(t)=\sum_{|\bm m_B|_1>t}2^{-\sum_{j\in B}\rho_jm_j}$,
the triangle inequality and \eqref{eq:tensor-decay} give
\[
 \norm[L^2]{F_{\bm\ell}}\le C2^{-(k+1)|\bm\ell|_1}
       \tau(\nu_n-|\bm\ell|_1)\norm[H_{\rm mix}^{\bm\rho}]u.
\]
For $b=|B|>0$, the rates are at least $k+2$, so \cref{lem:tail} gives
$\tau(t)\le C2^{-(k+2)t}(t+1)^{b-1}$ for $t\ge0$, while $\tau(t)\le C$
for $t<0$. There are $\binom{s+v-1}{v-1}\le C_v(s+1)^{v-1}$ indices
$\bm\ell$ with $|\bm\ell|_1=s$.
The part of \eqref{eq:discontinuous-groups} with $s>\nu_n$ is
bounded by $C2^{-2(k+1)\nu_n}(\nu_n+1)^{v-1}$ times the squared mixed norm.
For $s\le\nu_n$ the scalar factor is at most
\begin{multline*}
 C\sum_{s=0}^{\nu_n}(s+1)^{v-1}2^{-2(k+1)s}
 2^{-2(k+2)(\nu_n-s)}(\nu_n-s+1)^{2(b-1)}\\
 \le C2^{-2(k+1)\nu_n}(\nu_n+1)^{v-1}
       \sum_{t=0}^{\infty}2^{-2t}(t+1)^{2(b-1)}.
\end{multline*}
The series converges. If $B$ is empty, only $s>\nu_n$ contributes.
Taking the square root and using
$2^{-(k+1)\nu_n}(\nu_n+1)^{(v-1)/2}\le C_{k,d}h_n^{k+1}(n+1)^{(v-1)/2}$
proves the last case of \eqref{eq:component-error}.
\end{proof}

\subsection{Double forms and reduced graph norms}
For bidegree $(a,b)$ define the high-regularity coefficient space
\begin{equation*}
 \mathcal X_k^{a,b}=\bigoplus_{\bm s\in\mathcal B_a,\bm t\in\mathcal B_b}
 H_{\rm mix}^{\bm\rho(\bm s,\bm t)}e_{\bm s}\otimes e_{\bm t},\qquad
 \rho_j(\bm s,\bm t)=k+3-s_j-t_j,
\end{equation*}
with the norm given by the square root of the sum of squared mixed Sobolev
norms of the coefficients.

For $d=3$, the bidegrees $(a_i,b_i)$ at each position $i$ of the complexes
in \eqref{eq:Hcomplex}--\eqref{eq:Dcomplex} are listed in \cref{tab:bidegrees}.
\begin{table}[htbp]
\centering
\caption{Double-form bidegrees $(a_i,b_i)$ of the three-dimensional BGG complexes.}
\label{tab:bidegrees}
\begin{tabular}{lcccc}
\toprule
Complex $\star$&$i=0$&$i=1$&$i=2$&$i=3$\\\midrule
Hessian $\mathfrak{H}$&$(0,0)$&$(1,1)$&$(2,1)$&$(3,1)$\\
Elasticity $\mathfrak{E}$&$(0,1)$&$(1,1)$&$(2,2)$&$(3,2)$\\
Div--div $\mathfrak{D}$&$(0,2)$&$(1,2)$&$(2,2)$&$(3,3)$\\\bottomrule
\end{tabular}
\end{table}

For a fixed complex $\star$, define
$\mathcal X_{\star,k}^i=\mathcal X_k^{a_i,b_i}\cap L^2(\Omega;\mathbb X_\star^i)$,
with the norm inherited from $\mathcal X_k^{a_i,b_i}$.
The interpolant is defined on $\mathcal H_{{\rm mix},\star}^i$, while $\mathcal X_{\star,k}^i$
specifies the regularity used in the approximation estimates.

All algebraic maps that preserve the coordinate-weight vector are bounded
between the corresponding $\mathcal X_k^{a,b}$ spaces. Also
$\dd:\mathcal X_k^{a,b}\longrightarrow\mathcal X_k^{a+1,b}$ is bounded.
Indeed, differentiation in direction $j$ replaces $\bm\rho$ by
$\bm\rho-\bm e_j$, while algebraic maps combine identical mixed spaces.
The same argument applies to the reduced differentials.

\begin{theorem}[Interpolation estimates]\label{thm:interpolation-error}
For $u\in\mathcal X_k^{a,b}$,
\begin{equation*}
 \begin{aligned}
 \norm[L^2]{u-I_n^{a,b}u}&\le C\mathcal E_{a,b}(n)\norm[\mathcal X_k^{a,b}]u,\\
 \mathcal E_{a,b}(n)&=\begin{cases}
 h_n^{k+3}(n+1)^{d-1},&a=b=0,\\
 h_n^{k+2}(n+1)^{a+b-1},&\min(a,b)=0<a+b,\\
 h_n^{k+1}(n+1)^{(\min(a,b)-1)/2},&\min(a,b)\ge1.
 \end{cases}
 \end{aligned}
\end{equation*}
For $d=3$, $i=0,1,2$, and $u\in\mathcal X_{\star,k}^i$,
\begin{equation*}
 \norm[H(D_\star^i)]{u-I_{\star,n}^iu}
 \le C\big(\mathcal E_{a_i,b_i}(n)\norm[\mathcal X_{\star,k}^i]u
 +\mathcal E_{a_{i+1},b_{i+1}}(n)
   \norm[\mathcal X_{\star,k}^{i+1}]{D_\star^iu}\big).
\end{equation*}
\end{theorem}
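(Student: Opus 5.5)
The first estimate reduces to \cref{thm:component}, applied coefficient by coefficient. Fix a basis pair $(\bm s,\bm t)$ and set $\bm q=\bm2-\bm s-\bm t$. The mixed regularity is then $\rho_j=k+1+q_j$, exactly the exponent used in \cref{thm:component}. Since $I_n^{a,b}$ acts on each coefficient by $I_n^{\bm q}$, we have
\[
 \norm[L^2]{u-I_n^{a,b}u}^2=\sum_{\bm s,\bm t}\norm[L^2]{u_{\bm s,\bm t}-I_n^{\bm q}u_{\bm s,\bm t}}^2 .
\]
It therefore suffices to bound each coefficient's rate by $\mathcal E_{a,b}(n)$ and then sum finitely many terms.

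The case analysis follows the entries of $\bm q$. If $a=b=0$, then $\bm q=\bm2$ and we are in the first case of \cref{thm:component}. If $\min(a,b)=0<a+b$, then $\bm s$ and $\bm t$ have disjoint supports trivially, one of them being zero. Hence $q_j\in\{1,2\}$, and $w=\#\{j:q_j=1\}=a+b$, which gives the second case. If $\min(a,b)\ge1$, the number $v$ of entries with $q_j=0$ equals $\#\{j:s_j=t_j=1\}$. This number varies with $(\bm s,\bm t)$, and I have to handle each subcase. When $v\ge1$, the third case gives $h_n^{k+1}(n+1)^{(v-1)/2}$. When $v=0$, the coefficient has a rate at least $h_n^{k+2}(n+1)^{d-1}$, which is dominated by $h_n^{k+1}$ because $h_n(n+1)^{d-1}\lesssim1$. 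The stated exponent $(\min(a,b)-1)/2$ caps $v$ only if $v\le\min(a,b)$, and that holds since $v\le|\bm s|_1$ and $v\le|\bm t|_1$. A smaller $v$ only improves the bound.

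For the graph-norm estimate on reduced spaces, write $e=u-I^i_{\star,n}u$. Its $L^2$ part is covered by the first estimate, because $\mathcal X^i_{\star,k}\subset\mathcal X^{a_i,b_i}_k$ and $I^i_{\star,n}$ is the restriction of $I^{a_i,b_i}_n$. For the derivative part, I use the commutation established in \cref{sec:reduced-sparse}, $D^i_\star I^i_{\star,n}=I^{i+1}_{\star,n}D^i_\star$, valid on the mixed domains. This gives $D^i_\star e=D^i_\star u-I^{a_{i+1},b_{i+1}}_nD^i_\star u$. Applying the first estimate at bidegree $(a_{i+1},b_{i+1})$ to $D^i_\star u$ requires $D^i_\star u\in\mathcal X^{i+1}_{\star,k}$. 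For this I intend to take $\norm[\mathcal X_{\star,k}^{i+1}]{D_\star^iu}$ finite as part of the hypothesis, as the right-hand side of the statement implicitly does; alternatively it follows from the boundedness remark preceding the theorem, with the shift $\bm\rho\mapsto\bm\rho-\bm e_j$. The commutation must also apply on the relevant domains. Since $\mathcal X_k\subset\mathcal H_{\rm mix}$ (because $\rho_j\ge q_j$), this holds. Adding the two bounds gives the second display.

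The main obstacle I expect is the middle case bookkeeping, specifically confirming that $v\le\min(a,b)$ and that the $v=0$ coefficients in the $\min(a,b)\ge1$ regime are dominated. The commutation argument for second-order $D^J$ is already settled earlier in the paper.
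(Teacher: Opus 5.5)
Your proposal is correct and follows the paper's own proof. The paper also applies \cref{thm:component} coefficientwise, splitting cases by $\min(a,b)$, using $v\le\min(a,b)$, and absorbing the $v=0$ coefficients through their extra factor $h_n$. It then uses the commuting property to reduce the graph-norm error to two $L^2$ estimates.
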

\begin{proof}
For the $L^2$ estimate, if $\min(a,b)=0$, count the $a+b$ singly occupied directions and use the
first two cases of \cref{thm:component}. Otherwise, each component with
$v=\#\{j:s_j=t_j=1\}>0$ satisfies the last case of that theorem, with
$v\le\min(a,b)$. Components with $v=0$ gain a factor $h_n$,
which absorbs any additional fixed power of $n+1$.
The orthonormal form basis gives
\begin{equation*}
\begin{aligned}
 \norm[L^2]{u-I_n^{a,b}u}^2
 &=\sum_{\bm s,\bm t}
   \norm[L^2]{u_{\bm s,\bm t}
        -I_n^{\bm2-\bm s-\bm t}u_{\bm s,\bm t}}^2\\
 &\le C\mathcal E_{a,b}(n)^2\norm[\mathcal X_k^{a,b}]u^2.
\end{aligned}
\end{equation*}

For the graph-norm estimate, set $e=u-I_{\star,n}^iu$.
The commuting property of \cref{prop:cochain} gives
\begin{equation*}
\begin{aligned}
 D_\star^ie&=D_\star^iu-I_{\star,n}^{i+1}D_\star^iu,\\
 \norm[H(D_\star^i)]e^2
 &=\norm[L^2]e^2+
   \norm[L^2]{D_\star^iu-I_{\star,n}^{i+1}D_\star^iu}^2.
\end{aligned}
\end{equation*}
Apply the $L^2$ estimate to the two terms to obtain the $H(D_\star^i)$-norm error estimate.
\end{proof}

The resulting error factors are summarized in \cref{tab:graph-error-factors}.
\begin{table}[htbp]
\centering
\caption{$H(D_\star^i)$ interpolation error for the three-dimensional BGG complexes.}
\label{tab:graph-error-factors}
\begin{tabular}{cccc}
\toprule
Complex&First operator&Second operator&Third operator\\\midrule
Hessian&$h_n^{k+1}$&$h_n^{k+1}$&$h_n^{k+1}$\\
Elasticity&$h_n^{k+1}$&$h_n^{k+1}(n+1)^{1/2}$&$h_n^{k+1}(n+1)^{1/2}$\\
Div--div&$h_n^{k+1}$&$h_n^{k+1}(n+1)^{1/2}$&$h_n^{k+1}(n+1)$\\\bottomrule
\end{tabular}
\end{table}
Let $\beta\ge0$ be the fixed exponent of $n+1$ in the relevant error bound.
Let $N_n$ and $N_n^{\rm FG}$ denote the sparse-grid and full-grid dimensions,
respectively. In terms of $N_n$, the factor $h_n^{k+1}(n+1)^\beta$ becomes
\begin{equation*}
 C N_n^{-(k+1)}(1+\log N_n)^{(k+1)(d-1)+\beta},\qquad
 N_n\eqsim2^n(n+1)^{d-1}.
\end{equation*}
By \cref{sec:reduced-sparse}, the full-grid rate under the same mixed regularity is
$h_n^{k+1}\eqsim(N_n^{\rm FG})^{-(k+1)/d}$. Under weaker mixed regularity, apply
\eqref{eq:basic-error} separately to $u$ and $D_\star^iu$, provided both
fields satisfy its regularity hypotheses.

\section{Numerical experiments}\label{sec:experiments}
We compare sparse-grid and full-grid finite element discretizations of
a three-dimensional kernel-constrained incompatibility problem on the unit cube
and a two-dimensional mixed clamped biharmonic problem on an annulus.

Both examples use $k=1,2$; their domains are specified below. To describe sparse
and full methods together, we take $\mathcal L=\mathcal S_n$ for sparse grids or
$\mathcal L=\mathcal F_n$ for full grids.
Loads are obtained by analytic differentiation of the manufactured fields.

\subsection{Incompatibility problem}\label{sec:inc}
On $\Omega=(0,1)^3$, we use the natural-boundary formulation \eqref{eq:inc-weak}
with the full spaces $V_{\mathfrak E,\mathcal L}^1\subset H(\inc;\mathbb{S})$ and
$M_h=V_{\mathfrak E,\mathcal L}^0\cap\mathcal K_{\mathfrak E}^{\perp_{L^2}}$,
where $V_{\mathfrak E,\mathcal L}^0\subset H^1(\Omega;\mathbb{R}^3)$.
Set $g(t)=\sin^6(\pi t)$ and choose
\[
 T=\operatorname{diag}\bigl(g(y)g(z),g(x)g(z),g(x)g(y)\bigr),
 \qquad U=\inc T,\qquad F=\inc(\inc U).
\]
Each diagonal entry of $T$ depends on two coordinates. These fields satisfy
the boundary conditions in Example~\ref{ex:inc} and
$U\perp_{L^2}\ker(\inc)$, so the exact multiplier is $p=0$.
We report
\[
 e_U=\frac{\norm[L^2]{U-U_h}}{\norm[L^2]{U}},\qquad
 e_{\inc}=\frac{\norm[L^2]{\inc(U-U_h)}}{\norm[L^2]{\inc U}}
\]
against $N=\dim V_{\mathfrak E,\mathcal L}^1+\dim M_h$,
where $\dim M_h=\dim V_{\mathfrak E,\mathcal L}^0-6$ excludes the rigid-motion modes.
\begin{figure}[!htbp]
 \centering
 \includegraphics[width=.49\linewidth]{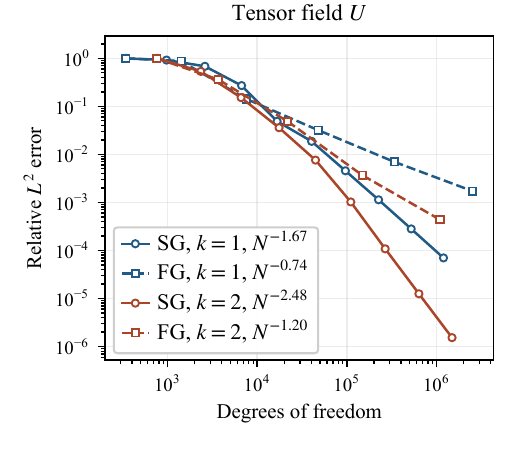}\hfill
 \includegraphics[width=.49\linewidth]{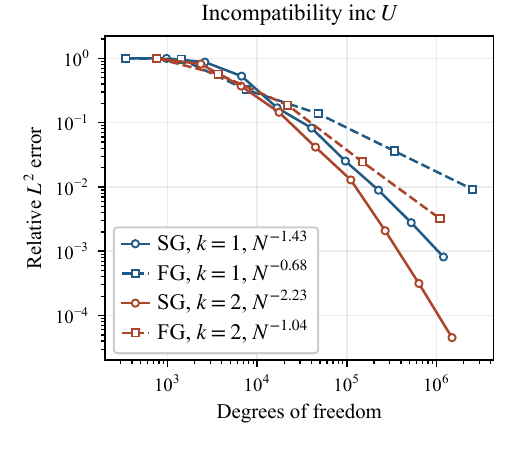}
 \caption{Natural-boundary incompatibility problem: relative $L^2$ errors of
 $U$ (left) and $\inc U$ (right) versus total dofs, for $k=1,2$.
 SG: solid lines; FG: dashed lines.
 Legend exponents in $e\approx CN^{-p}$ are fitted to the last three points.}\label{fig:inc}
\end{figure}
On the finer levels, \cref{fig:inc} shows smaller errors and faster decay in $N$
for sparse grids. The fitted rates describe the displayed range; the
sparse-grid bounds in \cref{sec:approximation} also contain logarithmic factors.

\subsection{Mixed biharmonic problem on an annulus}\label{sec:biharmonic-example} Although our theory is developed on cubical domains, we test the performance
of the proposed sparse-grid BGG on the following annulus as a numerical extension:
\[
 \Omega=\{\bm x\in\mathbb R^2:\,0.05=r_{\rm in}<|\bm x-\bm c|<r_{\rm out}=0.35\},
 \qquad \bm c=(1/2,1/2).
\]
We solve $\Delta^2u=f$ with $u=\partial_{\boldsymbol n}u=0$ on $\partial\Omega$.
Set $L=r_{\rm out}-r_{\rm in}$, $\theta_p=\pi(p-1)$ and $\widehat\Omega=(0,1)^2$.
The standard reference-patch parametrizations~\cite{BRSV2011,BDKSVW2020} are
\[
\begin{aligned}
 \Phi_p(\xi,\eta)&=\bm c+r(\cos\theta,\sin\theta),\qquad
 \Omega_p=\Phi_p(\widehat\Omega),\quad p=1,2,\\
 r(\xi)&=r_{\rm in}+L\xi,\qquad \theta=\theta_p+\pi\eta.
\end{aligned}
\]
We use the following reference functions on the reference domain $\widehat{\Omega}$:
\[
\begin{gathered}
\widehat Q_{\mathcal L}=F_{\mathcal L}^{(0,0)},\\
 \widehat\Sigma_{\mathcal L}
 =\left\{\widehat\tau=\begin{pmatrix}\widehat A&\widehat B\\\widehat B&\widehat C\end{pmatrix}:
 (\widehat A,\widehat B,\widehat C)\in F_{\mathcal L}^{(2,0)}\times F_{\mathcal L}^{(1,1)}
                       \times F_{\mathcal L}^{(0,2)}\right\}.
\end{gathered}
\]
For $\widehat\tau\in\widehat\Sigma_{\mathcal L}$ and
$\widehat U\in\widehat Q_{\mathcal L}$, define the functions
$A,B,C,U$ of $(r,\theta)$ by
\[
 (A,B,C,U)(r,\theta)
 =(L^2\widehat A,L\pi\widehat B,\pi^2\widehat C,\widehat U)
 \!\left(\frac{r-r_{\rm in}}L,\frac{\theta-\theta_p}{\pi}\right).
\]

In the polar frame $\bm e_r=(\cos\theta,\sin\theta)^\top$,
$\bm e_\theta=(-\sin\theta,\cos\theta)^\top$, we write
$\tau_{ij}(r,\theta)=\bm e_i^\top\tau(\bm c+r\bm e_r)\bm e_j$ and define
$(\mathcal I g)(r,\theta)=\int_{r_{\rm in}}^r g(s,\theta)\,ds$.
Define the moment transformation $\tau=\mathcal M_p\widehat\tau$ and scalar map by
\begin{equation*}
 \tau_{rr}=\frac{A+\mathcal I(rC)}{r},\quad
 \tau_{r\theta}=B-\frac{\mathcal I B}{r},\quad
 \tau_{\theta\theta}=rC,\quad
 (v\circ\Phi_p)(\xi,\eta)=\frac{\widehat U(\xi,\eta)}{r(\xi)}.
\end{equation*}
Differentiating the moment transformation gives
\[
 (r\tau_{rr})_r-\tau_{\theta\theta}=A_r,\quad
 (r\tau_{r\theta})_r=rB_r,\quad
 (\Div\Div\tau)\circ\Phi_p=r(\xi)^{-1}\Div\Div\widehat\tau.
\]
The radial primitives cancel curvature terms, also across radial jumps of $C$.
On the seams $\Gamma$ at $\theta=\pi$ and $0\equiv2\pi$, the common-normal
moment and effective shear are $rC$ and $2\partial_rB+\partial_\theta C$.
With $\llbracket\cdot\rrbracket$ denoting trace jumps, define
\[
\begin{aligned}
 \Sigma_{\mathcal L}
 &=\{\tau:\ \tau|_{\Omega_p}\in\mathcal M_p\widehat\Sigma_{\mathcal L},
      \ \llbracket B\rrbracket=\llbracket C\rrbracket=\llbracket\partial_\theta C\rrbracket=0\text{ on }\Gamma,\ p=1,2\},\\
 Q_{\mathcal L}
 &=\{v\in L^2(\Omega):\ r(\xi)(v\circ\Phi_p)\in\widehat Q_{\mathcal L},
                                     \ p=1,2\}.
\end{aligned}
\]
These conforming spaces satisfy $\Div\Div\Sigma_{\mathcal L}\subset Q_{\mathcal L}$.
Find $(\sigma_h,u_h)\in\Sigma_{\mathcal L}\times Q_{\mathcal L}$ such that
\[
\begin{aligned}
 (\sigma_h,\tau_h)_\Omega+(u_h,\Div\Div\tau_h)_\Omega&=0
       &&\forall\tau_h\in\Sigma_{\mathcal L},\\
 (\Div\Div\sigma_h,v_h)_\Omega&=-(f,v_h)_\Omega
       &&\forall v_h\in Q_{\mathcal L}.
\end{aligned}
\]

For $s=|\bm x-\bm c|^2$, choose $\sigma=-\hess u$ and
\[
 \phi=\frac{4(s-r_{\rm in}^2)(r_{\rm out}^2-s)}
                 {(r_{\rm out}^2-r_{\rm in}^2)^2},\qquad
 u=\phi^2e^{x+y},\qquad f=\Delta^2u.
\]
Using independent higher-order error quadrature, we plot the relative errors
$e_w=\norm[L^2(\Omega)]{w-w_h}/\norm[L^2(\Omega)]{w}$, $w=\sigma,u$,
against $N=\dim\Sigma_{\mathcal L}+\dim Q_{\mathcal L}$ after gluing.
At comparable dof counts, \cref{fig:biharmonic} shows smaller sparse-grid
errors on the finer levels.

\begin{figure}[!ht]
 \centering
 \includegraphics[width=.49\linewidth]{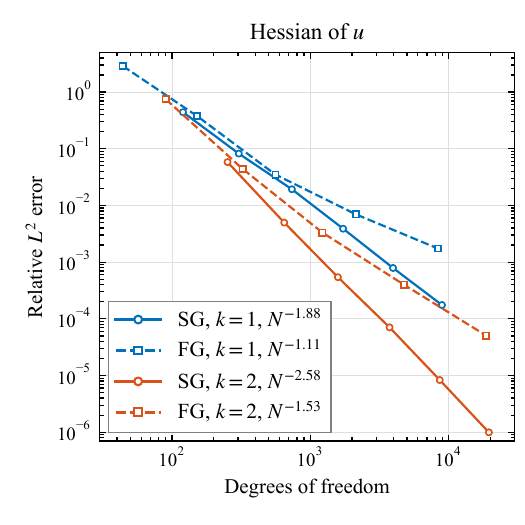}\hfill
 \includegraphics[width=.49\linewidth]{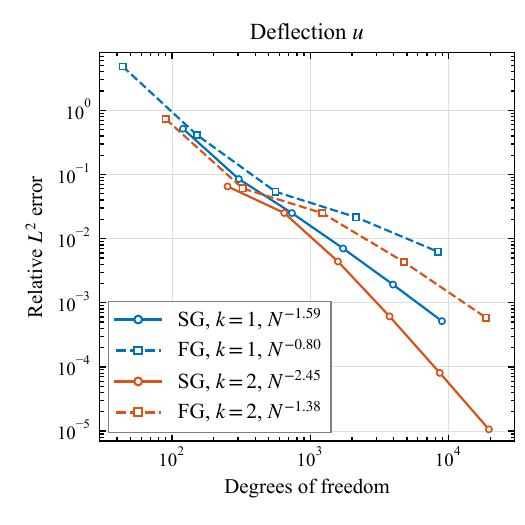}
 \caption{Mapped annulus with two patches: relative Hessian (left) and
 deflection (right) $L^2$ errors versus total dofs, $k=1,2$.
 SG: solid; FG: dashed. Legend exponents in $e\approx CN^{-p}$
 are fitted to the last three points.}
 \label{fig:biharmonic}
\end{figure}
\FloatBarrier

\section*{Declaration of AI use}
During the preparation of this work, the authors used GPT-5.6 sol and GPT-6.0 Astra (OpenAI) to polish the writing,
generate numerical codes, and conduct computational experiments. The authors reviewed and
edited all generated output and take full responsibility for the content of the paper.

\bibliographystyle{siamplain}

\bibliography{ref}

\begin{thebibliography}{10}

\bibitem{Alpert1993}
{\sc B.~K. Alpert}, {\em A class of bases in {$L^2$} for the sparse representation of integral operators}, SIAM J. Math. Anal., 24 (1993), pp.~246--262, \url{https://doi.org/10.1137/0524016}.

\bibitem{AFW2006}
{\sc D.~N. Arnold, R.~S. Falk, and R.~Winther}, {\em Finite element exterior calculus, homological techniques, and applications}, Acta Numer., 15 (2006), pp.~1--155, \url{https://doi.org/10.1017/S0962492906210018}.

\bibitem{AFW2010}
{\sc D.~N. Arnold, R.~S. Falk, and R.~Winther}, {\em Finite element exterior calculus: from {Hodge} theory to numerical stability}, Bull. Amer. Math. Soc. (N.S.), 47 (2010), pp.~281--354, \url{https://doi.org/10.1090/S0273-0979-10-01278-4}.

\bibitem{AH}
{\sc D.~N. Arnold and K.~Hu}, {\em Complexes from complexes}, Found. Comput. Math., 21 (2021), pp.~1739--1774, \url{https://doi.org/10.1007/s10208-021-09498-9}.

\bibitem{BKG2025}
{\sc Y.~Berchenko-Kogan and E.~S. Gawlik}, {\em Finite element spaces of double forms}.
\newblock arXiv preprint, 2025, \url{https://arxiv.org/abs/2505.17243}.

\bibitem{BHKS}
{\sc F.~Bonizzoni, K.~Hu, G.~Kanschat, and D.~Sap}, {\em Discrete tensor product {BGG} sequences: splines and finite elements}, Math. Comp., 94 (2025), pp.~517--549, \url{https://doi.org/10.1090/mcom/3969}.

\bibitem{BDKSVW2020}
{\sc A.~Buffa, J.~D{\"o}lz, S.~Kurz, S.~Sch{\"o}ps, R.~V{\'a}zquez, and F.~Wolf}, {\em Multipatch approximation of the {de Rham} sequence and its traces in isogeometric analysis}, Numer. Math., 144 (2020), pp.~201--236, \url{https://doi.org/10.1007/s00211-019-01079-x}.

\bibitem{BRSV2011}
{\sc A.~Buffa, J.~Rivas, G.~Sangalli, and R.~V{\'a}zquez}, {\em Isogeometric discrete differential forms in three dimensions}, SIAM J. Numer. Anal., 49 (2011), pp.~818--844, \url{https://doi.org/10.1137/100786708}.

\bibitem{BG2004}
{\sc H.-J. Bungartz and M.~Griebel}, {\em Sparse grids}, Acta Numer., 13 (2004), pp.~147--269, \url{https://doi.org/10.1017/S0962492904000182}.

\bibitem{CapHu2023}
{\sc A.~{\v{C}ap} and K.~Hu}, {\em Bounded {Poincar{\'e}} operators for twisted and {BGG} complexes}, J. Math. Pures Appl., 179 (2023), pp.~253--276, \url{https://doi.org/10.1016/j.matpur.2023.09.008}.

\bibitem{CHM2015}
{\sc S.~N. Chandler-Wilde, D.~P. Hewett, and A.~Moiola}, {\em Interpolation of {Hilbert} and {Sobolev} spaces: quantitative estimates and counterexamples}, Mathematika, 61 (2015), pp.~414--443, \url{https://doi.org/10.1112/S0025579314000278}.

\bibitem{CHComplex2026}
{\sc L.~Chen and X.~Huang}, {\em Complexes from complexes: finite element complexes in three dimensions}, Math. Comp., 95 (2026), pp.~1083--1142, \url{https://doi.org/10.1090/mcom/4079}.

\bibitem{CM2010}
{\sc M.~Costabel and A.~McIntosh}, {\em On {Bogovski{\u{\i}}} and regularized {Poincar{\'e}} integral operators for {de Rham} complexes on {Lipschitz} domains}, Math. Z., 265 (2010), pp.~297--320, \url{https://doi.org/10.1007/s00209-009-0517-8}.

\bibitem{GH2003}
{\sc V.~Gradinaru and R.~Hiptmair}, {\em Mixed finite elements on sparse grids}, Numer. Math., 93 (2003), pp.~471--495, \url{https://doi.org/10.1007/s002110100382}.

\bibitem{Gradinaru2002}
{\sc V.~C. Gr{\u{a}}dinaru}, {\em {Whitney} elements on sparse grids}, PhD thesis, Universit{\"a}t T{\"u}bingen, T{\"u}bingen, Germany, 2002, \url{https://publikationen.uni-tuebingen.de/xmlui/handle/10900/48372}.

\bibitem{HuLin2025}
{\sc K.~Hu and T.~Lin}, {\em Finite element form-valued forms: construction}.
\newblock arXiv preprint, 2025, \url{https://arxiv.org/abs/2503.03243}.

\bibitem{HL}
{\sc Y.~Hu and Y.~Li}, {\em Higher-order finite element {de Rham} complexes on sparse grids}.
\newblock arXiv preprint, 2026, \url{https://arxiv.org/abs/2609.27768}.

\bibitem{PZ2020}
{\sc D.~Pauly and W.~Zulehner}, {\em The {divDiv}-complex and applications to biharmonic equations}, Appl. Anal., 99 (2020), pp.~1579--1630, \url{https://doi.org/10.1080/00036811.2018.1542685}.

\bibitem{PZ2023}
{\sc D.~Pauly and W.~Zulehner}, {\em The elasticity complex: compact embeddings and regular decompositions}, Appl. Anal., 102 (2023), pp.~4393--4421, \url{https://doi.org/10.1080/00036811.2022.2117497}.

\bibitem{ST2026}
{\sc K.~Shepherd and D.~Toshniwal}, {\em Locally-verifiable sufficient conditions for exactness of the hierarchical {B}-spline discrete {de Rham} complex in {$\mathbb{R}^n$}}, Found. Comput. Math., 26 (2026), pp.~525--567, \url{https://doi.org/10.1007/s10208-024-09659-6}.

\bibitem{TH2021}
{\sc D.~Toshniwal and T.~J.~R. Hughes}, {\em Isogeometric discrete differential forms: non-uniform degrees, {B{\'e}zier} extraction, polar splines and flows on surfaces}, Comput. Methods Appl. Mech. Engrg., 376 (2021), p.~113576, \url{https://doi.org/10.1016/j.cma.2020.113576}.

\bibitem{WTGC2016}
{\sc Z.~Wang, Q.~Tang, W.~Guo, and Y.~Cheng}, {\em Sparse grid discontinuous {Galerkin} methods for high-dimensional elliptic equations}, J. Comput. Phys., 314 (2016), pp.~244--263, \url{https://doi.org/10.1016/j.jcp.2016.03.005}.

\end{thebibliography}

\end{document}